\documentclass{article}

\usepackage[english]{babel}

\usepackage[letterpaper,top=2cm,bottom=2cm,left=3cm,right=3cm,marginparwidth=1.75cm]{geometry}

\usepackage{amsmath}
\usepackage{graphicx}
\usepackage[colorlinks=true, allcolors=blue]{hyperref}
\usepackage{float}
\usepackage[utf8]{inputenc}
\usepackage{amssymb}
\usepackage{amsthm}
\usepackage{subfig,wrapfig}
\usepackage{ulem} 

\usepackage{physics}
\def\sign{{\textrm{sign}}}
\def\dx{{\textrm{d}x}}
\def\R{{\mathbb{R}}}

\newtheorem{theorem}{Theorem}
\newtheorem{proposition}{Proposition}
\newtheorem{corollary}{Corollary}
\newtheorem{definition}{Definition}

\usepackage{color}
\usepackage{cleveref}

\newtheorem{remark}[theorem]{Remark}
\newcommand{\zhou}[1]{{\color{blue}{[Zhou: #1]}}}

\title{Angles, orthogonality, and Pythagorean theorem in Banach spaces with two related  applications}

\author{Antonio Cicone\thanks{DISIM, Universit\`a degli Studi dell'Aquila, L'Aquila, ITALY, and and Istituto Nazionale di Geofisica e Vulcanologia, Rome, ITALY. ({\tt antonio.cicone@univaq.it})}
\and
Stefano Serra-Capizzano\thanks{Insubria University - Como, DISAT - Section of Mathematics, Como, ITALY, and Uppsala University - Uppsala, Division of Scientific Computing, Dept of Information Technology, Uppsala, SWEDEN. ({\tt s.serracapizzano@uninsubria.it})}
\and
Giacomo Tento\thanks{Insubria University - Como, DISAT - Section of Mathematics, Como, ITALY. ({\tt gtento@uninsubria.it})}
\and
Haomin Zhou\thanks{School of Mathematics, Georgia Institute of Technology, Atlanta, GA, USA. ({\tt hmzhou@math.gatech.edu})}
}

\begin{document}
\maketitle

\begin{abstract}
In the current work, we propose a generalization of angles and orthogonality from $L^2$ to generic Banach spaces, starting from a $L^p$ version of the Pythagorean theorem, $p\in [1,\infty)$. The starting point is conservation of energy measured in $L^1$ norm, as it occurs when considering the intrinsic mode functions decomposition in signal processing. This conservation of energy measure in $L^1$ norm is exactly the $L^1$ Pythagorean theorem. Besides the theoretical analysis, we apply the new notions in the context of preconditioning for structured large linear systems, by obtaining new classes of preconditioners.
The present work contains numerical experiments and various remarks on the possible use of the given framework.

\end{abstract}

\section{Introduction}\label{sec:Intro}

In this paper, building on results obtained in Signal Processing and Numerical Linear Algebra, we propose a generalization of angles and orthogonality from $L^2$ to generic Banach spaces. Such a generalization allows us to extend the Pythagorean theorem and the concept of inner product and angles to any Banach space.

In Signal Processing, and in particular in the field of non-stationary signal decomposition, it has been recently proved that if we decompose a given signal $s\in\R^n$ into simple oscillatory components, called Intrinsic Mode Functions (IMFs), using the Fast Iterative Filtering (FIF) algorithm \cite{cicone2021numerical}, the $L_1$ Fourier Energy $E_{L1}$ of $s$, defined as $E_{L1}(s)=\|\widehat{s}\|_1$, where $\widehat{s}$ represents the Discrete Fourier transform of $s$, is conserved \cite{cicone2024new}.

This conservation of energy suggests that, in some sense, the IMFs are orthogonal. However, it is well known that they are not orthogonal, in general, in the classical $L_2$ sense \cite{cicone2022multivariate}.

The idea is to introduce a new concept of orthogonality and angles in $L_1$ spaces, and, more generally, Banach spaces, that can help to explain what is observed in applications. More importantly, the questions are:  is it possible to generalize the concept of angles, orthogonality and the Pythagorean theorem to generic Banach spaces? If so, what do we gain with the new concept of orthogonality and the related versions of the Pythagorean theorem? Are there applications in which we observe such a situation? For sure, the answer is positive when considering the IMFs in the domain of non-stationary signal decomposition, while related minimization processes give surprisingly practical results in the context of approximation and preconditioning of structured linear systems \cite{Ng_book,GLT-bookI}. Motivated by these two applications in very different contexts, we would like to check whether a more general picture exists and can be described.

\section{Main results}\label{sec:Results}

It is well-known that the classical concepts of angles, inner product,  and orthogonality are not defined in Banach spaces in general. There have been studies to extend the concept of angles to operators \cite{gustafson1968angle,axler1997numerical}, and to introduce a more general concept of the inner product in what is called the semi-inner product \cite{Dragomir2004, Giles1967, lumer1961semi}. However, using these definitions to generalize the concept of angles and orthogonality to Banach spaces can be challenging for various reasons.

The idea we propose in this work is to leverage dual spaces to generalize the inner product to a Banach space and to use this more general version of the inner product to define angles between elements of Banach spaces.

\subsection{Weak inner product, the induced angle and orthogonality}

We start with the definition of a weak inner product for a generic Banach space $\mathbb{X}$ and its dual $\mathbb{X}^*$. We adopt the notation that $f^* \in \mathbb{X}^*$ is a normalized dual mapping satisfying $f^*(f) = \|f\|^\alpha$ for any $f \in \mathbb{X}$, where $\alpha > 0$ can be chosen according to $\mathbb{X}$. For example, we take $\alpha = 2$ if $\mathbb{X}$ is a Hilbert space, and $\alpha = p,(1 \leq p < \infty)$ when $\mathbb{X}$ is $L^p$. 

\begin{definition}[Weak inner product]\label{def:inner_X} 
Given $f, g \in \mathbb{X}$,  we define their weak inner product in $\mathbb{X}$ as
\begin{equation}\label{eq:Inner_X}
	\langle f, g \rangle = \frac{1}{2} [\left((f+g)^*-f^*\right)(f)+\left((f+g)^*-g^*\right)(g)],
\end{equation}
where $f^*, g^*, (f+g)^* \in\mathbb{X}^*$ are the corresponding dual mappings. 
\end{definition}

We point out that this concept of weak inner product is neither an inner product, unless $\mathbb{X}$ is Hilbert, nor a semi-inner product, as those defined in \cite{Dragomir2004, Giles1967, lumer1961semi}, because $\langle f, g \rangle$ is not necessarily linear in $f$ or $g$ in general. Using this definition, we can introduce the concept of angles between elements of a Banach space $\mathbb{X}$.

\begin{definition}[Angles in a Banach space]\label{def:angle_X} 
Given $f,\ g\in \mathbb{X}$,  we define the angle in $ \mathbb{X}$ between $f$ and $g$ as
\begin{equation}\label{eq:angle_X}
	\theta_{f,\,g}=(\widehat{f,\, g})=\textrm{arccot} \langle f, g \rangle. 
\end{equation}
\end{definition}

We can now define orthogonality in the $\mathbb{X}$ sense.
\begin{definition}[Orthogonality in a Banach space]\label{def:orth_X} 
Given $f,g\in \mathbb{X}$, they are orthogonal in the $\mathbb{X}$ sense if and only if their weak inner product in $\mathbb{X}$ is zero, or equivalently, if the angle between them in the $\mathbb{X}$ space, $(\widehat{f,\, g})$,  equals $\frac{\pi}{2}$.
\end{definition}

Hence, the generalization of the Pythagorean Theorem to Banach spaces reads as follows.
\begin{theorem}[Generalized Pythagorean Theorem]\label{thm:pytha_X} 
Given $f,\ g\in \mathbb{X}$, where $\mathbb{X}$ is a Banach space, and $f^*, g^*, (f+g)^* \in\mathbb{X}^*$ with $\mathbb{X}^*$ being its dual space, $f$ and $g$ are orthogonal in the sense of $ \mathbb{X}$, if and only if
\begin{equation}\label{eq:pytha_X}
	\|f + g\|^\alpha = \|f\|^\alpha + \|g\|^\alpha.
\end{equation}
\end{theorem}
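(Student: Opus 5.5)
The plan is a direct algebraic computation with the defining property of the normalized dual mappings, $h^*(h)=\|h\|^\alpha$ for every $h\in\mathbb{X}$, together with the linearity of each functional in $\mathbb{X}^*$. No earlier result beyond Definitions~\ref{def:inner_X} and \ref{def:orth_X} should be needed.

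The first step is to expand \eqref{eq:Inner_X} using linearity of the functionals $(f+g)^*$, $f^*$, $g^*$:
\begin{equation*}
2\langle f,g\rangle = (f+g)^*(f) - f^*(f) + (f+g)^*(g) - g^*(g).
\end{equation*}
Since $(f+g)^*$ is a single linear functional, its first and third terms combine to $(f+g)^*(f+g)$. The normalization then turns each remaining term into a power of a norm:
\begin{equation*}
2\langle f,g\rangle = \|f+g\|^\alpha - \|f\|^\alpha - \|g\|^\alpha .
\end{equation*}
This identity is the whole content of the theorem. It is the Banach-space analogue of the polarization identity $2\,\mathrm{Re}(f,g)=\|f+g\|^2-\|f\|^2-\|g\|^2$, which it reduces to when $\mathbb{X}$ is Hilbert and $\alpha=2$.

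The second step is to read off the equivalence. By Definition~\ref{def:orth_X}, $f$ and $g$ are orthogonal in the $\mathbb{X}$ sense if and only if $\langle f,g\rangle=0$. By the identity above, this holds if and only if $\|f+g\|^\alpha=\|f\|^\alpha+\|g\|^\alpha$, which is \eqref{eq:pytha_X}. The clause about the angle being $\pi/2$ adds nothing, since $\textrm{arccot}\,0=\pi/2$ and arccot is injective.

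The only point needing care is well-definedness, since dual mappings need not be unique (e.g.\ in $L^1$ or at non-smooth points). I will handle this by noting that the identity holds for any admissible choice of $f^*$, $g^*$, $(f+g)^*$ satisfying the normalization. The right-hand side depends only on norms, so $\langle f,g\rangle$ is in fact independent of that choice. One also has to remember that we need $(f+g)^*$ to be one fixed functional applied to both $f$ and $g$; that is exactly how Definition~\ref{def:inner_X} is written. With this observation the proof is complete, and I expect no real obstacle beyond it.
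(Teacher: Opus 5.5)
Your proof is correct and takes the same route as the paper. Both arguments rest on the identity $2\langle f,g\rangle=\|f+g\|^\alpha-\|f\|^\alpha-\|g\|^\alpha$, obtained from linearity of the single functional $(f+g)^*$ and the normalization $h^*(h)=\|h\|^\alpha$. Your version also has the correct $+$ sign on the $g$-term, where the paper's middle displayed line has a stray minus, and it states both directions of the equivalence explicitly.
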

\begin{proof} According to the definition \eqref{def:inner_X}, we have 
\begin{align*}
        \|f + g\|^\alpha - \|f\|^\alpha - \|g\|^\alpha & = (f+g)^* (f+g) - f^*(f) - g^*(g) \\
        & = ((f+g)^*-f^*)(f) - ((f+g)^*-g^*)(g) \\
        & = 2 \langle f, g \rangle = 0. 
\end{align*}
\end{proof}

Unlike the inner product defined in a Hilbert space, which is bilinear, the weak inner product defined in \eqref{def:inner_X} is not bilinear, unless $\mathbb{X}$ becomes a Hilbert space. Here are some properties of the weak inner product that can be verified directly from its definition.

\begin{proposition}
Let $f, g$ be in a Banach space $\mathbb{X}$, and $\beta \in \mathbb{R}$. The following statements hold. 
\begin{enumerate}
    \item $\langle f, g \rangle =  \langle g, f \rangle$,
    \item $ \langle f, f \rangle = \|f\|^\alpha$,
    \item $\langle \beta f, \beta g \rangle = \beta^\alpha \langle f, g \rangle$,
    \item $\langle f, g \rangle = \frac{1}{2} (\|f + g\|^\alpha - \|f\|^\alpha - \|g\|^\alpha)$.
\end{enumerate}
\end{proposition}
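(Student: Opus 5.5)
The plan is to prove item~4 first, directly from Definition~\ref{def:inner_X}, and then read items~1--3 off that formula. Item~4 is the computation already used in the proof of Theorem~\ref{thm:pytha_X}. Expanding the bracket in \eqref{eq:Inner_X} gives
\begin{equation*}
2\langle f,g\rangle=(f+g)^*(f)+(f+g)^*(g)-f^*(f)-g^*(g).
\end{equation*}
By linearity of $(f+g)^*$, the first two terms combine to $(f+g)^*(f+g)$. The normalization $h^*(h)=\|h\|^\alpha$, applied with $h=f+g$, $h=f$ and $h=g$, then turns the right-hand side into $\|f+g\|^\alpha-\|f\|^\alpha-\|g\|^\alpha$. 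This is item~4.

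Item~1 follows at once from item~4, since $\|f+g\|^\alpha-\|f\|^\alpha-\|g\|^\alpha$ is symmetric in $f$ and $g$. One can also see it from \eqref{eq:Inner_X} itself, because swapping $f$ and $g$ only exchanges the two summands.

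For item~3 I will apply item~4 to the pair $\beta f,\beta g$ and use absolute homogeneity of the norm, $\|\beta h\|=|\beta|\,\|h\|$, for $h=f+g$, $h=f$ and $h=g$. This factors $|\beta|^\alpha$ out of each of the three terms, so $\langle\beta f,\beta g\rangle=|\beta|^\alpha\langle f,g\rangle$. This matches the stated $\beta^\alpha$ whenever $\beta\ge0$, and also for every real $\beta$ when $\alpha$ is an even integer (for instance the Hilbert case $\alpha=2$). I will write the argument in this form and point out that the factor is really $|\beta|^\alpha$.

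Item~2 is the step I expect to be the main obstacle, because here the normalization conventions matter. Putting $g=f$ in item~4 gives
\begin{equation*}
\langle f,f\rangle=\tfrac12\bigl(\|2f\|^\alpha-2\|f\|^\alpha\bigr)=(2^{\alpha-1}-1)\|f\|^\alpha.
\end{equation*}
This equals $\|f\|^\alpha$ exactly when $\alpha=2$, which is the Hilbert normalization fixed at the beginning of the subsection. In that case the statement is the familiar identity $\langle f,f\rangle=\|f\|^2$. I will present this computation and state the identification through $\alpha=2$ explicitly, rather than hide it, since for other $\alpha$ the definition only yields $\langle f,f\rangle$ up to the constant $2^{\alpha-1}-1$.
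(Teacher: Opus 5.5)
Your argument is the direct verification from Definition~\ref{def:inner_X} that the paper has in mind. The paper writes no proof, only that the properties ``can be verified directly from its definition''. Your treatment of items~4 and~1 is correct. Your expansion also shows that $\langle f,g\rangle$ does not depend on which dual mappings are chosen, and it has the right sign, unlike the middle line of the paper's proof of Theorem~\ref{thm:pytha_X}.

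Your reservations about items~2 and~3 are not matters of convention but genuine errors in the statement, and you should present them that way rather than as an ``identification through $\alpha=2$''. The paper explicitly takes $\alpha=p$ on $L^p$ and $\alpha=1$ on $L^1$, so $\alpha=2$ is not a standing assumption. For $f\neq 0$, your identity $\langle f,f\rangle=(2^{\alpha-1}-1)\|f\|^\alpha$ refutes item~2 for every $\alpha\neq 2$. In $L^1$ it gives $\langle f,f\rangle=0$, whereas item~2 would give $\langle f,f\rangle=\|f\|_1>0$, contradicting the paper's own Theorem~\ref{thm:Schwarz1}.

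For item~3, take $\alpha=1$, $\beta=-1$, $\|f\|=1$ and $g=-f$. Then $\langle \beta f,\beta g\rangle=\langle -f,f\rangle=-1$, while $\beta^\alpha\langle f,g\rangle=+1$. For non-integer $\alpha$ and $\beta<0$, $\beta^\alpha$ is not even real. The correct statement is the one you prove, with the factor $|\beta|^\alpha$.
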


\begin{theorem} \label{thm:birkhoff}
    Let $f, g$ be in a Banach space $\mathbb{X}$. $f$ and $g$ are orthogonal in the $\mathbb{X}$ sense implies that $f$ is Birkhoff orthogonal to $g$, i.e. $\|f + \beta g\| \geq \|f\|$ for any $\beta \in \mathbb{R}$.  
\end{theorem}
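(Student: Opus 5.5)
The plan is to pass through the dual mapping $f^*$ and reduce Birkhoff orthogonality to a single annihilation property, namely $f^*(g)=0$. Once we have $f^*(g)=0$, the conclusion follows in one line. By linearity of $f^*$,
\[
\|f\|^\alpha = f^*(f) = f^*(f) + \beta f^*(g) = f^*(f+\beta g) \le \|f^*\|\,\|f+\beta g\|
\]
for every $\beta\in\mathbb{R}$. Since $f^*$ is a normalized dual mapping with $f^*(f)=\|f\|^\alpha$, its norm is $\|f^*\|=\|f\|^{\alpha-1}$; this is the standard normalization and I will state it explicitly as part of what ``normalized'' means. If $f=0$ there is nothing to prove. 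Otherwise, dividing by $\|f\|^{\alpha-1}$ gives $\|f+\beta g\|\ge\|f\|$, which is the claim.

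So the work is to extract $f^*(g)=0$ from orthogonality in the $\mathbb{X}$ sense, i.e.\ from $\langle f,g\rangle=0$. By Theorem~\ref{thm:pytha_X} this is equivalent to $\|f+g\|^\alpha=\|f\|^\alpha+\|g\|^\alpha$. I would proceed in two steps.

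\textbf{Step 1 (bracketing $f^*(g)$).} Consider the convex function $\varphi(t)=\|f+tg\|^\alpha$ (convex for $\alpha\ge1$). The functional $\alpha\|f\|^{\alpha-1}\cdot f^*/\|f\|^{\alpha-1}=\alpha f^*$ is a subgradient of $\|\cdot\|^\alpha$ at $f$. Hence
\[
\varphi(t)\ge\varphi(0)+\alpha t f^*(g),
\]
and $\alpha f^*(g)$ lies between the one-sided derivatives $\varphi'_-(0)$ and $\varphi'_+(0)$.

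\textbf{Step 2 (using orthogonality at both ends).} Rewrite the weak inner product as in Definition~\ref{def:inner_X}. Since $(f+g)^*(f+g)=\|f+g\|^\alpha$, the condition $\langle f,g\rangle=0$ reads
\[
\bigl((f+g)^*-f^*\bigr)(f)+\bigl((f+g)^*-g^*\bigr)(g)=0.
\]
I would pair this with the symmetric subgradient inequalities at $f+g$, at $f$ and at $g$. The aim is to squeeze $f^*(g)$ to zero, in the same way that in the Hilbert case the expansion $\|f+g\|^2=\|f\|^2+2f^*(g)+\|g\|^2$ forces $f^*(g)=0$ immediately.

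The main obstacle is Step 2. Outside the Hilbert setting, the Pythagorean identity at the single point $t=1$ does not by itself determine the slope of $\varphi$ at $0$. My first attempt would be to exploit the freedom in choosing the dual mappings $f^*,g^*,(f+g)^*$ that enter the definition. The idea is to show that the identity $\langle f,g\rangle=0$ forces $(f+g)^*$ to restrict on $f$ to $f^*$, i.e.\ $(f+g)^*(f)=\|f\|^\alpha$, and on $g$ to $g^*$, i.e.\ $(f+g)^*(g)=\|g\|^\alpha$. Given those restrictions, equality in the norming inequalities makes $f^*$ norm the segment from $f$ toward $f+g$, and one can read off $f^*(g)=0$. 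If this does not close in full generality, I would isolate exactly which property of the dual mappings is needed (for instance, smoothness of the norm at $f$) and state it as the working hypothesis under which the argument above goes through.
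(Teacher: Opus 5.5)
Your opening reduction is sound: a norming functional $f^*$ with $\|f^*\|=\|f\|^{\alpha-1}$ and $f^*(g)=0$ does give $\|f+\beta g\|\ge\|f\|$, and Step 1 is also correct. But Step 2 is the entire content of the theorem, and it cannot be completed because the implication is false. Your remark that the Pythagorean identity at $t=1$ does not control the slope of $\varphi$ at $0$ is exactly the fatal point.

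\textbf{Counterexample with $\alpha=1$.} Take any $f\neq0$ and $g=f$. By item 4 of the Proposition, $\langle f,g\rangle=\frac12(\|2f\|-2\|f\|)=0$; in $L^1$ the integrand of \eqref{eq:Inner_L1} vanishes identically, since $\sign(2f)=\sign(f)$. Yet $\|f-g\|=0<\|f\|$. In this example your restriction identities $(f+g)^*(f)=\|f\|$ and $(f+g)^*(g)=\|g\|$ do hold, while $f^*(g)=\|f\|\neq0$. So even when those identities hold, they do not lead to $f^*(g)=0$.

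\textbf{Smoothness at $f$ is not the missing hypothesis.} Work in $(\R^2,\|\cdot\|_1)$ with $\alpha=2$. The point $f=(1,1)$ is smooth, with unique dual mapping $f^*=(2,2)$. Take $g=(1,-\frac12)$. Then $\|f+g\|_1^2=\frac{25}{4}=\|f\|_1^2+\|g\|_1^2$, so $\langle f,g\rangle=0$. However $f^*(g)=1$ and $\|f-\frac{1}{10}g\|_1=1.95<2$. Here even your restriction identities fail: $(f+g)^*=(\frac52,\frac52)$, so $(f+g)^*(f)=5\neq\|f\|^2$.

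\textbf{Uniform convexity and smoothness do not help either.} In $\ell^3(\R^2)$ with $\alpha=3$, take $f=(1,1)$ and $g=(1,b)$, where $b<-1$ is the root of $3b^2+3b-4=0$. These satisfy $\|f+g\|_3^3=\|f\|_3^3+\|g\|_3^3$. But the derivative of $\beta\mapsto\|f+\beta g\|_3^3$ at $\beta=0$ equals $3(1+b)<0$, so Birkhoff orthogonality fails.

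The paper's two-line proof has the same hole. Writing $\|f+\beta g\|^\alpha=\|f\|^\alpha+\|\beta g\|^\alpha$ for every $\beta$ amounts to assuming $\langle f,\beta g\rangle=0$ for all $\beta\in\R$. That does not follow from $\langle f,g\rangle=0$, because the weak inner product is not homogeneous in a single argument; item 3 of the Proposition only scales both arguments together.

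What that argument does establish is the statement under the stronger hypothesis $\langle f,\beta g\rangle=0$ for all $\beta\in\R$. Examples are $f,g$ with disjoint supports in $L^p$ with $\alpha=p$, or any orthogonal pair in a Hilbert space with $\alpha=2$. Under that hypothesis Birkhoff orthogonality is immediate. So rather than trying to close Step 2, you should record that the statement is false as written and either adopt that stronger hypothesis or restrict to Hilbert spaces.
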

\begin{proof}
    Since $\langle f, g \rangle = 0$, we have 
    $$
    \|f+\beta g\|^\alpha  = \|f\|^\alpha + \|\beta g\|^\alpha \geq \|f\|^\alpha. 
    $$
    This gives $\|f + \beta g\| \geq \|f\|$.  
\end{proof}
We note that the converse statement of Theorem \ref{thm:birkhoff} is not true in general. 

The definitions of angles and orthogonality presented in this section can be extended to normed linear spaces. For simplicity, we focus our discussion in this paper on Banach spaces and postpone its further generalization to normed linear spaces to future work.

\subsection{Special cases}

In the following, we give a short description of three cases of interest.

\subsubsection{Hilbert space}

If the Banach space becomes a Hilbert space, for example, $\mathbb{X}=L^2$, if we take $\alpha = 2$, the weak inner product proposed in Definition \ref{def:inner_X} coincides with the classical inner product which induces the norm in the Hilbert space. This is guaranteed by the presence of $\frac{1}{2}$ in front of \eqref{eq:Inner_X}.

\subsubsection{\texorpdfstring{$L^p$}{Lp} spaces}

In the case of an $L^p$ space, for $1< p < \infty$, and its dual is $L^q$, such that $\frac{1}{p}+\frac{1}{q}=1$. The dual mapping of $f \in L^p$ is given by 
$$
f^*(x) = |f(x)|^{p-2}f(x). 
$$
This suggests that we can take $\alpha = p$, because 
$$
f^*(f) = \int f^*(x) f(x) dx = \|f\|^p.
$$

Following Definition \ref{def:orth_X}, given $f, g \in L^p$ and $f^*,\ g^*,\ (f+g)^* \in L^q$ then $f$ and $g$ are orthogonal in the $L^p$ sense if
\begin{equation}\label{eq:InnerProd}
	\langle f, g \rangle_p = \frac{1}{2}\int f(x)\left((f(x)+g(x))^*-f^*(x)\right)+g(x)\left((f(x)+g(x))^*-g^*(x)\right) \dx=0.
\end{equation}

\subsubsection{\texorpdfstring{$L^1$}{L1} space}

In the case of the $L^1$ space and its dual $L^\infty$, we can define the angle between two elements in $L^1$ by introducing a weak inner product using the sign function defined in the dual space. From Definition \ref{def:inner_X} it follows that

\begin{definition}[Weak inner product in $L^1$]\label{def:inner_L1} 
Given $f,\ g\in L^1$ and $\sign(f(x)+g(x))$, $\sign(f(x))$, $\sign(g(x))\in L^\infty$ we define their weak inner product in $L^1$ as
\begin{equation}\label{eq:Inner_L1}
	\langle f, g \rangle_1 = \frac{1}{2}\int f(x)\left(\sign(f(x)+g(x))-\sign(f(x))\right)+g(x)\left(\sign(f(x)+g(x))-\sign(g(x))\right) \dx.
\end{equation}
\end{definition}

From this definition, we can derive the concept of the angle between two elements in $L^1$.
\begin{definition}[Angles in $L^1$]\label{def:angle_L1} 
Given $f,\ g\in L^1$ and $\sign(f(x)+g(x)),\ \sign(f(x)),\ \sign(g(x))\in L^\infty$ we define the angle in $L^1$ between $f$ and $g$ as
\begin{equation}\label{eq:angle_L1}
	\theta_{f,\,g}=(\widehat{f,\, g})_1=\textrm{arccot} \langle f, g \rangle_1.
\end{equation}
\end{definition}

The orthogonality follows. 
\begin{corollary}[Orthogonality in $L^1$]\label{cor:orth_L1}
Given $f,g\in L^1$, their weak inner product in $L^1$ is zero if and only if they are orthogonal in the $L^1$ sense.
\end{corollary}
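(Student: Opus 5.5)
The corollary is the specialization of Definition~\ref{def:orth_X} to $\mathbb{X}=L^1$, $\mathbb{X}^*=L^\infty$, $\alpha=1$. The plan is to check that the general weak inner product \eqref{eq:Inner_X} reduces to \eqref{eq:Inner_L1}, and then invoke the definition of orthogonality. Along the way we also obtain the $L^1$ Pythagorean characterization from Theorem~\ref{thm:pytha_X}.

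\textbf{Step 1: the dual mapping.} For $f\in L^1$ we take $f^*=\sign(f)\in L^\infty$. Then
\[
f^*(f)=\int \sign(f(x))f(x)\,\dx=\int |f(x)|\,\dx=\|f\|_1 ,
\]
so this is a normalized dual mapping with $\alpha=1$. Similarly $(f+g)^*=\sign(f+g)$ and $g^*=\sign(g)$. All three are bounded by $1$ in absolute value, so the pairings with $f,g\in L^1$ are finite integrals.

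\textbf{Step 2: reduction of the weak inner product.} Substituting these functionals into \eqref{eq:Inner_X}, each term $\bigl((f+g)^*-f^*\bigr)(f)$ becomes $\int f(x)\bigl(\sign(f(x)+g(x))-\sign(f(x))\bigr)\,\dx$. The term in $g$ transforms in the same way. Hence the general weak inner product coincides with $\langle f,g\rangle_1$ of Definition~\ref{def:inner_L1}.

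\textbf{Step 3: conclusion.} By Definition~\ref{def:orth_X}, $f$ and $g$ are orthogonal in the $L^1$ sense exactly when $\langle f,g\rangle_1=0$, equivalently when $\theta_{f,g}=\operatorname{arccot}0=\pi/2$. This proves the statement.

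As a consistency check, expanding the integrand gives
\[
\langle f,g\rangle_1=\tfrac12\bigl(\|f+g\|_1-\|f\|_1-\|g\|_1\bigr).
\]
By the triangle inequality this quantity is always $\le 0$, and it vanishes if and only if $f(x)g(x)\ge 0$ almost everywhere. So $L^1$-orthogonality means $\|f+g\|_1=\|f\|_1+\|g\|_1$, in agreement with Theorem~\ref{thm:pytha_X}.

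\textbf{Main obstacle.} The only delicate point is the choice of $\sign(0)$, since on the set $\{f=0\}$ the dual mapping is not unique. The definition of orthogonality could a priori depend on this choice. However, in each pairing the sign function multiplies its own argument, or the difference multiplies $f$ or $g$ in a way that reassembles into $|f+g|-|f|-|g|$ pointwise. I would verify pointwise that
\[
f\bigl(\sign(f+g)-\sign f\bigr)+g\bigl(\sign(f+g)-\sign g\bigr)=|f+g|-|f|-|g| ,
\]
which uses $\sign(h)\,h=|h|$ for any convention. This shows the result is independent of the convention at zeros.
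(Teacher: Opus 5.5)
Your proposal is correct and follows the paper's route: the paper gives no separate proof, treating the corollary as the immediate specialization of Definition~\ref{def:orth_X} to $L^1$ with $\alpha=1$ and dual mapping $\sign(f)$, which is exactly your Steps 1--3. Your added pointwise identity $f\bigl(\sign(f+g)-\sign f\bigr)+g\bigl(\sign(f+g)-\sign g\bigr)=|f+g|-|f|-|g|$ is correct and shows the notion does not depend on the convention for $\sign(0)$, a point the paper leaves implicit.
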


We are now ready to present the Pythagorean Theorem for the $L^1$ space.
\begin{theorem}[Pythagorean theorem in $L^1$]\label{thm:pytha_L1} 
Given $f,\ g\in \mathbb{X}$, they are orthogonal in the $L^1$ sense, if and only if
\begin{equation}\label{eq:pytha_L1}
	\|f+g\|_1=\|f\|_1+\|g\|_1.
\end{equation}
\end{theorem}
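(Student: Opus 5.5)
The plan is to compute the weak inner product of Definition~\ref{def:inner_L1} directly and show that it equals $\frac12\bigl(\|f+g\|_1-\|f\|_1-\|g\|_1\bigr)$. The statement then follows immediately, with the understanding that $f,g\in L^1$, i.e.\ $\mathbb{X}=L^1$ with $\alpha=1$. This is just the specialization of Theorem~\ref{thm:pytha_X} and of item 4 of the preceding Proposition, but I would verify it pointwise so that the $L^1$ case, where the dual map is $\sign(\cdot)$, is checked concretely.

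\textbf{Step 1.} Since $|f|,|g|,|f+g|$ and $\sign$ are bounded or integrable, the integrand in \eqref{eq:Inner_L1} is integrable. Use the pointwise identity $h(x)\,\sign(h(x))=|h(x)|$ for $h=f,g,f+g$. Expanding the integrand gives
\begin{equation*}
(f+g)\,\sign(f+g)-f\,\sign(f)-g\,\sign(g)=|f+g|-|f|-|g| .
\end{equation*}
Integrating yields
\begin{equation*}
\langle f,g\rangle_1=\tfrac12\bigl(\|f+g\|_1-\|f\|_1-\|g\|_1\bigr).
\end{equation*}

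\textbf{Step 2.} By Corollary~\ref{cor:orth_L1}, $f$ and $g$ are $L^1$-orthogonal if and only if $\langle f,g\rangle_1=0$. By Step 1 this holds if and only if \eqref{eq:pytha_L1} holds, which proves both directions at once.

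For illustration, I would also note the pointwise reformulation. Since $|f+g|\le |f|+|g|$ everywhere, the integrand $|f|+|g|-|f+g|$ is nonnegative. Hence \eqref{eq:pytha_L1} is equivalent to $f(x)g(x)\ge 0$ almost everywhere: $f$ and $g$ have the same sign wherever both are nonzero.

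The only mildly delicate point is the convention $\sign(0)$. Any choice in $[-1,1]$ still gives $h\,\sign(h)=|h|$, since $h=0$ there. So the identity in Step 1 is insensitive to this convention, and I do not expect a real obstacle.
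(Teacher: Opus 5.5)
Your proof is correct and follows essentially the paper's route. The paper obtains this result as the $\alpha=1$ case of Theorem~\ref{thm:pytha_X}, whose proof expands $(f+g)^*(f+g)-f^*(f)-g^*(g)$ using linearity of $(f+g)^*$ and $h^*(h)=\|h\|^\alpha$; with $h^*=\sign(h)$ that is exactly your pointwise identity $h\,\sign(h)=|h|$, and your version also has the correct sign where the paper's displayed middle line has a typo (the second term should be $+\bigl((f+g)^*-g^*\bigr)(g)$).
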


\begin{theorem} \label{thm:Schwarzp} Given $f$ and $g$ in a Banach space $\mathbb{X}$, if $\alpha = p$ where $p$ is an integer larger than $1$, then
\begin{equation}
    |\langle f, g \rangle| \leq \sum_{k=1}^{p-1} \binom{p}{k} \|f\|^k \|g\|^{p-k}. 
\end{equation}
Particularly if $\alpha = 2$, the weak inner product satisfies 
\begin{equation}
    |\langle f, g \rangle| \leq \|f\| \|g\|. 
\end{equation}
\end{theorem}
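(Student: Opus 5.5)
The plan is to use item 4 of the Proposition, which expresses the weak inner product purely through norms:
$$
\langle f, g \rangle = \tfrac{1}{2}\left(\|f+g\|^p - \|f\|^p - \|g\|^p\right).
$$
With this identity the dual mappings drop out, and the statement becomes a scalar inequality about the three nonnegative numbers $a=\|f\|$, $b=\|g\|$ and $c=\|f+g\|$. The only structural fact we need is the triangle inequality, $c\le a+b$. We do not use $c\ge |a-b|$ except in the case $p=2$, and even there only optionally.

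For the upper bound on $\langle f,g\rangle$, first note that $t\mapsto t^p$ is increasing on $[0,\infty)$. Hence $c^p\le (a+b)^p$. Expanding with the binomial theorem gives
$$
c^p - a^p - b^p \le \sum_{k=1}^{p-1}\binom{p}{k}a^k b^{p-k},
$$
and therefore $\langle f,g\rangle \le \tfrac12\sum_{k=1}^{p-1}\binom{p}{k}\|f\|^k\|g\|^{p-k}$. This is in fact sharper than what is claimed.

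For the lower bound, use $c^p\ge 0$ to get $\langle f,g\rangle \ge -\tfrac12(a^p+b^p)$. It remains to check that $\tfrac12(a^p+b^p)\le \sum_{k=1}^{p-1}\binom{p}{k}a^kb^{p-k}$. I expect this step to be the obstacle, because it fails when one of $a,b$ vanishes and the other does not. For example, with $g=0$ we get $\langle f,0\rangle = \tfrac12(\|f\|^p-\|f\|^p)=0$, so that case is harmless. But the crude bound $c\ge 0$ is too weak there, and we must use $c\ge |a-b|$ instead. This gives
$$
-\langle f,g\rangle \le \tfrac12\left(a^p+b^p-|a-b|^p\right).
$$
Assume without loss of generality that $a\ge b$. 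Expanding $(a-b)^p$ shows that $a^p+b^p-(a-b)^p$ equals $b^p-(-b)^p$ plus the alternating middle terms $-\sum_{k=1}^{p-1}\binom{p}{k}a^k(-b)^{p-k}$. Its absolute value is bounded by $2b^p+\sum_{k=1}^{p-1}\binom{p}{k}a^kb^{p-k}$. Since $b\le a$, we have $b^p\le a^{p-1}b$, which is one of the binomial terms (with $k=p-1$, coefficient $p\ge 2$). So $2b^p$ is absorbed, and the total is at most $2\sum_{k=1}^{p-1}\binom{p}{k}a^kb^{p-k}$. Halving this yields the claim. I would double-check the absorption constant here.

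For $p=2$ the computation is immediate and sharper. From $|a-b|\le c\le a+b$ we get
$$
-2ab \le c^2-a^2-b^2 \le 2ab,
$$
so $|\langle f,g\rangle|\le \|f\|\|g\|$. This is the Cauchy--Schwarz-type bound.
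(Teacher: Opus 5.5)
Your upper-bound step is exactly the paper's proof. The paper also rewrites $\langle f,g\rangle$ via item 4 of the Proposition and expands $(\|f\|+\|g\|)^p$ binomially.

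The paper, however, stops there. It only shows $\langle f,g\rangle\le\sum_{k=1}^{p-1}\binom{p}{k}\|f\|^k\|g\|^{p-k}$, and it discards the factor $\tfrac12$ along the way. So it never bounds $-\langle f,g\rangle$, which the absolute value requires. At $p=2$ its bound is $2\|f\|\|g\|$, which does not give the stated inequality $|\langle f,g\rangle|\le\|f\|\|g\|$.

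Your proposal fills both holes:
\begin{itemize}
\item The reverse triangle inequality $\|f+g\|\ge\bigl|\|f\|-\|g\|\bigr|$ handles the lower side. Your absorption of $2b^p$ into the $k=p-1$ term $p\,a^{p-1}b$ is valid because $b\le a$ and $p\ge 2$, so the constant you wanted to double-check is fine.
\item Keeping the $\tfrac12$, together with $|a-b|\le c\le a+b$, gives the $p=2$ case directly.
\end{itemize}
So your argument is a correct and more complete version of the paper's.

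If you want a single statement covering both claims, note that $(a+b)^p+|a-b|^p\ge 2a^p+2b^p$ for integers $p\ge2$. To see this, take $a\ge b$ and compare the even-index binomial terms; when $p$ is odd, use $p\,ab^{p-1}\ge b^p$. It follows that $a^p+b^p-|a-b|^p\le(a+b)^p-a^p-b^p$. Hence $|\langle f,g\rangle|\le\tfrac12\sum_{k=1}^{p-1}\binom{p}{k}\|f\|^k\|g\|^{p-k}$ for every integer $p\ge2$, which contains both parts of the theorem.
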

\begin{proof}
  For an integer $p$ larger than $1$, by the binomal formula, we have 
  \begin{align*}
      \langle f, g \rangle & = \frac{1}{2} (\|f+g\|^p - \|f\|^p - \|g\|^p) \\ & \leq \sum_{k=1}^{p-1} \binom{p}{k} \|f\|^k \|g\|^{p-k}.
  \end{align*}
\end{proof}

Similarly, by the triangle inequality, we have the following statement.  
\begin{theorem} \label{thm:Schwarz1} Given $f$ and $g$ in $L^1$, in which $\alpha = 1$, the weak inner product satisfies 
\begin{equation}
    \langle f, g \rangle \leq 0. 
\end{equation}
\end{theorem}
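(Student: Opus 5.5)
The plan is to reduce the claim to the triangle inequality in $L^1$ through the norm identity for the weak inner product. We first check that with $\alpha=1$ the $L^1$ dual mappings are $f^*=\sign(f)$, $g^*=\sign(g)$ and $(f+g)^*=\sign(f+g)$. Indeed $f^*(f)=\int \sign(f(x))f(x)\,\dx=\int|f(x)|\,\dx=\|f\|_1$, so $f^*(f)=\|f\|_1^\alpha$ with $\alpha=1$, and the same holds for $g$ and $f+g$. Thus the weak inner product of Definition \ref{def:inner_L1} is exactly Definition \ref{def:inner_X} specialized to $L^1$ with this choice of dual mappings.

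Next we invoke item 4 of the Proposition on properties of the weak inner product, which with $\alpha=1$ reads
\begin{equation*}
\langle f, g\rangle_1=\tfrac12\bigl(\|f+g\|_1-\|f\|_1-\|g\|_1\bigr).
\end{equation*}
For a self-contained derivation, expand the integrand in \eqref{eq:Inner_L1}. The terms $f\,\sign(f+g)+g\,\sign(f+g)$ combine to $|f+g|$. The terms $-f\,\sign(f)-g\,\sign(g)$ give $-|f|-|g|$. Integrating produces the displayed identity. This rearrangement is purely pointwise and needs only that all three functions are integrable.

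Finally, the triangle inequality $\|f+g\|_1\le\|f\|_1+\|g\|_1$, equivalently the pointwise inequality $|f(x)+g(x)|\le|f(x)|+|g(x)|$ integrated over $x$, gives $\langle f,g\rangle_1\le 0$. This is immediate.

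There is no real obstacle. The only point needing care is consistency of conventions: the dual mapping of $f+g$ must be taken as $\sign(f+g)$, so that $(f+g)^*(f+g)=\|f+g\|_1$. On the set where $f+g=0$, any choice of sign in $[-1,1]$ would still work, since that term is multiplied by $f+g=0$ after combining. Equality $\langle f,g\rangle_1=0$ holds exactly when $f$ and $g$ have the same sign almost everywhere where both are nonzero, consistent with Theorem \ref{thm:pytha_L1}.
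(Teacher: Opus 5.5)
Your proposal is correct and follows the paper's route. The paper gets the inequality directly from the identity $\langle f,g\rangle=\frac12(\|f+g\|-\|f\|-\|g\|)$ (item 4 of the Proposition) combined with the $L^1$ triangle inequality; your pointwise expansion of the integrand in \eqref{eq:Inner_L1} simply makes that identity explicit for the $\sign$ dual mappings.
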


This last statement implies that the angle between any two elements in $L^1$ is not smaller than $\frac{\pi}{2}$. 

\section{Examples}

The section is divided into two parts. In the first, we give an application in signal processing, while in the second, we consider the preconditioning problem for the fast solution of structured large linear systems.

\subsection{Signal Processing}

In the field of Signal Processing of non-stationary signals, two decades ago, the so-called Iterative Filtering (IF) algorithm was proposed \cite{lin2009iterative} as a stable and convergent alternative to the famous Empirical Mode Decomposition method \cite{huang1998empirical}. These methods decompose a given signal into the so-called Intrinsic Mode Functions (IMFs), which are simple oscillatory mono-component signals. The decomposition is done iteratively. These algorithms extract the highest frequency component contained in the signal $s(t)$ using a sifting operator $\mathcal{M}$. In Iterative Filtering, the sifting operator corresponds to the convolution with a filter function $w(t)$ which is an even, positive, compactly supported function with integral equal to 1 and whose support is scaled automatically by IF based on the signal itself. So $\textrm{IMF}_1$ is obtained by iterating $s_{n+1}=s_n-\mathcal{M} (s_n)$ until a stopping criterion is fulfilled. Then $\textrm{IMF}_1$ is subtracted from the signal, and the entire procedure is repeated as it is to extract all subsequent IMFs, until no more oscillations are left in the remainder $r$ \cite{lin2009iterative}. In recent years, IF has been accelerated in the discrete setting in what is now known as the Fast Iterative Filtering (FIF) technique \cite{cicone2021numerical}. All these techniques are well known not to conserve the energy of the signal in a classical $L^2$ sense. In fact, given a signal $s\in L^2$ and its decomposition into $N$ IMFs plus a trend $r$, then in general $\|s\|_2^2 \neq \sum_{k=1}^N \|\textrm{IMF}_k\|_2^2+\|r\|_2^2$. This follows from the fact that, in general, the IMFs are not orthogonal to each other \cite{cicone2022multivariate}.

In the following, we consider a signal as a finite sequence $s=(s_j)_{0}^{n-1}$ in $\mathbb{R}^n$. This finite sequence should be viewed as a time series over the time interval $[0, 1]$, evenly sampled at the rate of $2B$. More precisely, for $j=0, 1, \dots,  n-1$, $n=2B$,
\begin{equation}
    t_j =\frac{j}{2B},\quad s_j = s(t_j).\label{eq:t_j}
\end{equation}

Thus, $(s_j)$ can be viewed as a discretization of $s$. With a bit of abuse of notation, $s$ refers to both the underlying signal and the discretization of the signal.

The discrete Fourier transform of $(s_j)$, $\hat{s}_k=\hat{s}(\xi_k)$, where
\begin{equation}
\xi_k=k, \quad k=0,1,\dots,n-1\label{eq:xi_k}
\end{equation}
and
\begin{equation}
\hat{s}(\xi_k)=\sum_{j=0}^{n-1}s_je^{-2{\pi}\hat i t_j{\xi}_k}.
\end{equation}
So the sampling rate for $\hat{s}$, the discrete Fourier transform of $s$, is $1$ over the frequency interval $[0,2B]$.

In a recent paper \cite{cicone2024new}, the authors proposed the following new definition of energy and its conservation.

\begin{definition}[$L_1$ Fourier Energy of a signal $s$]\label{def:L1_Energy}
Given a signal $s$, its $L_1$ \textit{Fourier Energy} is defined as
\begin{equation}\label{eq:L1_Energy}
E_{1}(s)=||\widehat{s}||_1.
\end{equation}
In particular, if $s=(s_j)\in\mathbb{R}^n$, then $E_1(s) =
\displaystyle{\sum_k |\widehat{s}(\xi_k)|}$ where $\xi_k$ is defined as in (\ref{eq:xi_k}).
\end{definition}

\begin{definition}[Conservation of $L_1$ Fourier Energy]\label{def:L1_Energy_cons}
Let $s\in\mathbb{R}^n$ and $s=\sum_k \phi_k$ be a decomposition of $s$. We say that the decomposition conserves the signal $L_1$ Fourier Energy if and only if
\begin{equation}\label{eq:Conservation_L1_Energy}
E_{1}(s)=\sum_k E_{1}(\phi_k)
\end{equation}
\end{definition}

They also proposed a definition of unwanted oscillations produced in a decomposition.

\begin{definition}[Unwanted oscillations] \label{def:Unwanted_oscillations}
Let $s\in\mathbb{R}^n$ and $s=\sum_k \phi_k$ be a decomposition of $s$. The decomposition $\{\phi_k\}$ contains unwanted oscillations if there exist $\xi$ such that
\begin{equation}\label{eq:Unwanted_oscillations}
    \sum_k \left|\widehat{\phi}_k(\xi)\right| > \left|\widehat{s}(\xi)\right|
\end{equation}
\end{definition}

And they proved the following.

\begin{theorem}[FIF decomposition $L_1$ Fourier energy conservation]\label{thm:Energy_conservation}
Let $s\in \mathbb{R}^n$ and $\delta >0$. Apply the FIF algorithm with $w$ a double convolution filter, and we have $s=\displaystyle{\sum_1^m \textrm{IMF}_k + r}$, where $r$ is a trend. Then this decomposition conserves the $L_1$ Fourier energy and produces no unwanted oscillations.
\end{theorem}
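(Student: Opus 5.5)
The plan is to reduce both claims to a frequency-by-frequency statement about the Fourier multipliers that FIF applies, and then use the positivity of the filter's transform. In FIF the signal is treated as periodic on the grid, so the sifting operator $\mathcal{M}$ is a circulant convolution with the discrete filter $w$. It is therefore diagonalized by the DFT: $\widehat{\mathcal{M}(s)}(\xi_k)=\widehat{w}(\xi_k)\,\widehat{s}(\xi_k)$. When $w$ is a double convolution filter, $w=u*u$ with $u$ even and real, we get $\widehat{w}(\xi_k)=\widehat{u}(\xi_k)^2$, which is real and lies in $[0,1]$. The upper bound comes from $\sum u_j=1$ and $u\ge 0$, which give $|\widehat{u}|\le 1$. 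This positivity is exactly why the double convolution hypothesis is there.

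First I would write each IMF as a multiplier applied to its input. If $\textrm{IMF}_1$ comes from $N_1$ sifting steps, then $\widehat{\textrm{IMF}_1}=(1-\widehat{w}_1)^{N_1}\widehat{s}$ and the remainder is $\widehat{s}_1=\bigl(1-(1-\widehat{w}_1)^{N_1}\bigr)\widehat{s}$. Both multipliers lie in $[0,1]$ because $0\le 1-\widehat{w}_1\le 1$. Iterating with the filters $w_k$ chosen for later IMFs, every component becomes $\widehat{\phi}_k(\xi)=c_k(\xi)\widehat{s}(\xi)$ with $c_k(\xi)\ge 0$. Since the components sum to $s$, we also have $\sum_k c_k(\xi)=1$ at every $\xi$; this holds by telescoping, because each stage splits its input multiplier $m$ into $m\,a$ and $m(1-a)$ with $a\in[0,1]$.

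Both conclusions then follow immediately, frequency by frequency. We have $\sum_k|\widehat{\phi}_k(\xi)|=\sum_k c_k(\xi)|\widehat{s}(\xi)|=|\widehat{s}(\xi)|$, so no $\xi$ satisfies \eqref{eq:Unwanted_oscillations}. Summing over $\xi_k$ gives \eqref{eq:Conservation_L1_Energy}. One can also phrase this in the paper's language: all the $\widehat{\phi}_k$ share the pointwise phase of $\widehat{s}$. Hence $\sign$-type terms in the weak inner product of Definition~\ref{def:inner_L1} (complex version) cancel, the components are mutually orthogonal in the $L^1$ sense, and Theorem~\ref{thm:pytha_L1} applied inductively gives the same conservation.

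The main obstacle I expect is justifying that $\widehat{w}\in[0,1]$ exactly in the discrete periodic setting. Here the filter is sampled and wrapped, and possibly rescaled to a support length that does not divide $n$. Nonnegativity requires $w$ to be literally an autocorrelation of a real sequence on $\mathbb{Z}_n$, so that $\widehat{w}=|\widehat{u}|^2$. I would define the discrete double convolution filter as $u*u$ on the grid, rather than as a sampled continuous $u*u$, to secure this. The stopping criterion is harmless, since it only fixes the integers $N_k$, and the trend $r$ is just the last remainder, which is covered by the telescoping.
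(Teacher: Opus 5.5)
Your argument is correct. It is essentially the proof in \cite{cicone2024new}; the present paper only quotes that result and gives no proof of its own. The shared idea is that FIF acts on each DFT coefficient through the multipliers $(1-\widehat{w}_k)^{N_k}$, and the double-convolution hypothesis places $\widehat{w}_k$ in $[0,1]$. Hence every $\widehat{\textrm{IMF}}_k$ and $\widehat{r}$ is a nonnegative multiple of $\widehat{s}$ with multipliers summing to one, which gives energy conservation and the absence of unwanted oscillations frequency by frequency.
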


This theorem suggests that the IMFs produced by the FIF are, in some sense, orthogonal to each other. Thanks to the definitions we presented in the previous section, we can now easily prove the following.

\begin{theorem}[IMFs orthognality in the $L^1$ sense] \label{thm:IMFs_orthogonality}
Let $s\in \mathbb{R}^n$ and $\delta >0$. Apply the FIF algorithm with $w$ a double convolution filter, which produces the decomposition $s=\displaystyle{\sum_1^m \textrm{IMF}_k + r}$, where $r$ is a trend. Then, the IMFs are orthogonal to each other in the $L^1$ sense.
\end{theorem}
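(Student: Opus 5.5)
The plan is to apply Theorem \ref{thm:pytha_L1} in the Fourier domain. The energy in Theorem \ref{thm:Energy_conservation} is measured on $\widehat{s}$, so the relevant space is $L^1$ of the frequency variable, i.e.\ $\ell^1$ over $\xi_k$ with complex values. We therefore read ``orthogonal in the $L^1$ sense'' as orthogonality of the Fourier transforms $\widehat{\textrm{IMF}}_k$ in this space. For complex values the sign function in Definition \ref{def:inner_L1} is replaced by $z/|z|$, with value $0$ at $z=0$. The identity $\langle f,g\rangle_1=\frac12(\|f+g\|_1-\|f\|_1-\|g\|_1)$ still holds in this setting, since it only uses $f^*(f)=\|f\|_1$ for each of $f,g,f+g$. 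Theorem \ref{thm:pytha_L1} then gives: $\widehat{\phi}$ and $\widehat{\psi}$ are orthogonal if and only if $\|\widehat{\phi}+\widehat{\psi}\|_1=\|\widehat{\phi}\|_1+\|\widehat{\psi}\|_1$.

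The proof should use the pointwise statement of ``no unwanted oscillations'' from Theorem \ref{thm:Energy_conservation}, not just the global energy identity. The global identity alone does not obviously pass to pairs. By Definition \ref{def:Unwanted_oscillations}, the absence of unwanted oscillations means that for every $\xi_k$
\[
\sum_j |\widehat{\phi}_j(\xi_k)| \le |\widehat{s}(\xi_k)| = \Bigl|\sum_j \widehat{\phi}_j(\xi_k)\Bigr|,
\]
where $\phi_j$ runs over the IMFs and $r$. The triangle inequality gives the reverse inequality, so equality holds at each frequency. Equality in the triangle inequality for complex numbers forces all nonzero $\widehat{\phi}_j(\xi_k)$ to share a common phase at each $\xi_k$. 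In fact, for a double convolution filter one expects the $\widehat{\textrm{IMF}}_k$ to be $\widehat{s}$ multiplied by nonnegative real factors, which gives the same conclusion.

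Now fix two indices $j\neq l$. At each $\xi_k$ the values $\widehat{\textrm{IMF}}_j(\xi_k)$ and $\widehat{\textrm{IMF}}_l(\xi_k)$ are either zero or share the same phase. Hence $|\widehat{\textrm{IMF}}_j(\xi_k)+\widehat{\textrm{IMF}}_l(\xi_k)|=|\widehat{\textrm{IMF}}_j(\xi_k)|+|\widehat{\textrm{IMF}}_l(\xi_k)|$ for every $k$. Summing over $k$ gives $E_1(\textrm{IMF}_j+\textrm{IMF}_l)=E_1(\textrm{IMF}_j)+E_1(\textrm{IMF}_l)$. Theorem \ref{thm:pytha_L1} then yields $\langle \widehat{\textrm{IMF}}_j,\widehat{\textrm{IMF}}_l\rangle_1=0$, which is the claimed orthogonality. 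The same argument covers the trend $r$, and it also gives orthogonality of any IMF to the sum of any other subset of the components.

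The main obstacle is interpretive rather than computational. We must fix that orthogonality is taken for the Fourier transforms in $\ell^1$, since the time-domain $L^1$ statement does not follow from Theorem \ref{thm:Energy_conservation}. We must also justify that the weak inner product and Theorem \ref{thm:pytha_L1} extend to complex-valued sequences with the complex sign. The remaining check is that the pointwise phase alignment, which is equality in the triangle inequality, is the correct reading of ``no unwanted oscillations.''
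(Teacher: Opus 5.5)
Your proposal is correct and is essentially the paper's argument, which simply invokes the $L^1$ Fourier energy conservation of Theorem~\ref{thm:Energy_conservation} together with the $L^1$ Pythagorean theorem; you make explicit what the paper leaves implicit, namely that the orthogonality concerns the DFTs in complex $\ell^1$, and that the statement about the whole decomposition must be reduced to pairs. Two minor remarks: first, the global identity alone already passes to pairs, since $\sum_j\|\widehat{\phi}_j\|_1=\|\sum_j\widehat{\phi}_j\|_1\le\|\widehat{\phi}_a+\widehat{\phi}_b\|_1+\sum_{j\ne a,b}\|\widehat{\phi}_j\|_1$ (it even implies the pointwise ``no unwanted oscillations'' property), so your detour through that property is harmless but not needed; second, to get $f^*(f)=\|f\|_1$ under the bilinear pairing of Definition~\ref{def:inner_L1}, the complex sign must be $\bar z/|z|$, not $z/|z|$.
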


The proof is a direct consequence of Theorem \ref{thm:Energy_conservation}, and in particular, the conservation of the $L^1$ energy of the signal when decomposed via FIF, and the Pythagorean Theorem in $L^1$, Theorem \ref{thm:pytha_L1}.

It is important to observe here that, different from $L^2$, in this setting we no longer have uniqueness in the orthogonal decomposition. We can see this clearly from the following example.

\subsubsection{Numerical example}

If we consider the following non-stationary signal

$$s(x) = \cos\left(2\pi\left(90x^2+10x\right)\right)+\cos(2\pi x), \qquad x\in [0,\, 1],$$
shown in \Cref{fig:Ex_1_sig}, left panel, it can be decomposed using the FIF algorithm\footnote{FIF algorithm is available for Matlab, Python and C at \url{www.cicone.com}}. Depending on the choice of the parameters in FIF, the decomposition may vary a lot, ref. \Cref{fig:Ex_1_sig} central and right panel. However, from \Cref{thm:IMFs_orthogonality} we know that all the decompositions produced using FIF provide a set of IMFs that are orthogonal to each other in the $L^1$ sense, no matter the choice of the parameters in FIF. This is a really important feature of the IF methods, which allows for flexible decompositions. Non-stationary signals, in fact, should be decomposed in different ways, depending on the application under investigation. In particular, if the signal is related to strongly non-stationary phenomena, like, for instance, gravitational wave signals \cite{Longo2025AdaptiveSL}, then it is preferable to capture the strongly non-stationary component related to the passage of a gravitational wave in a single component, like what is shown in the right panel of \Cref{fig:Ex_1_sig}. Whereas, if the goal is the analysis of which frequencies are active when, like, as an example, in the study of geophysical signals related to the upper atmosphere \cite{spogli2025investigating}, then we want to decompose the signal into quasi-stationary frequency bands. This latter is the approach used in the decomposition of the signal $s$ shown in the central panel of \Cref{fig:Ex_1_sig}.

\begin{figure}[H]
	\includegraphics[width=0.3\textwidth]{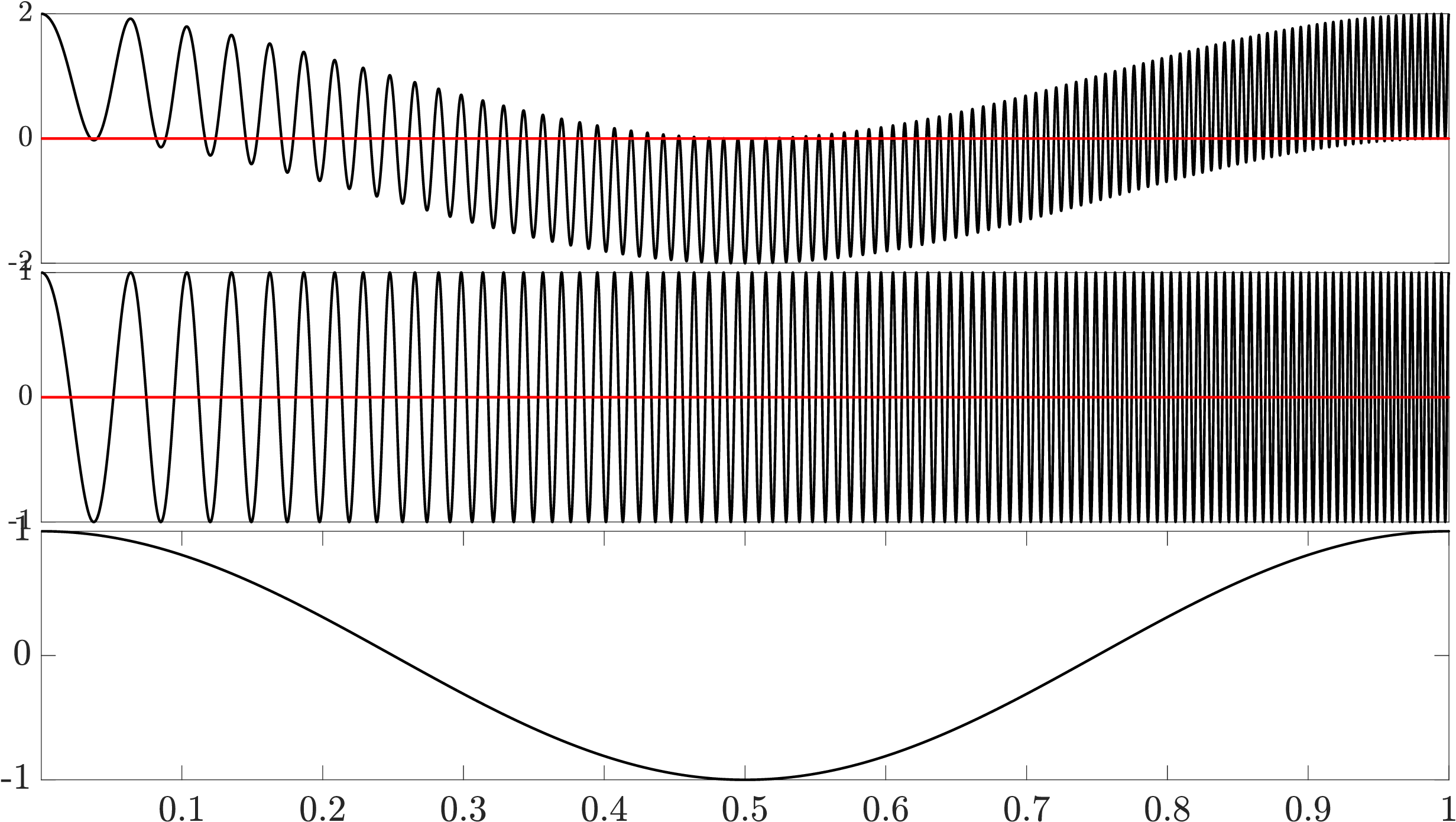}
    \includegraphics[width=0.35\textwidth]{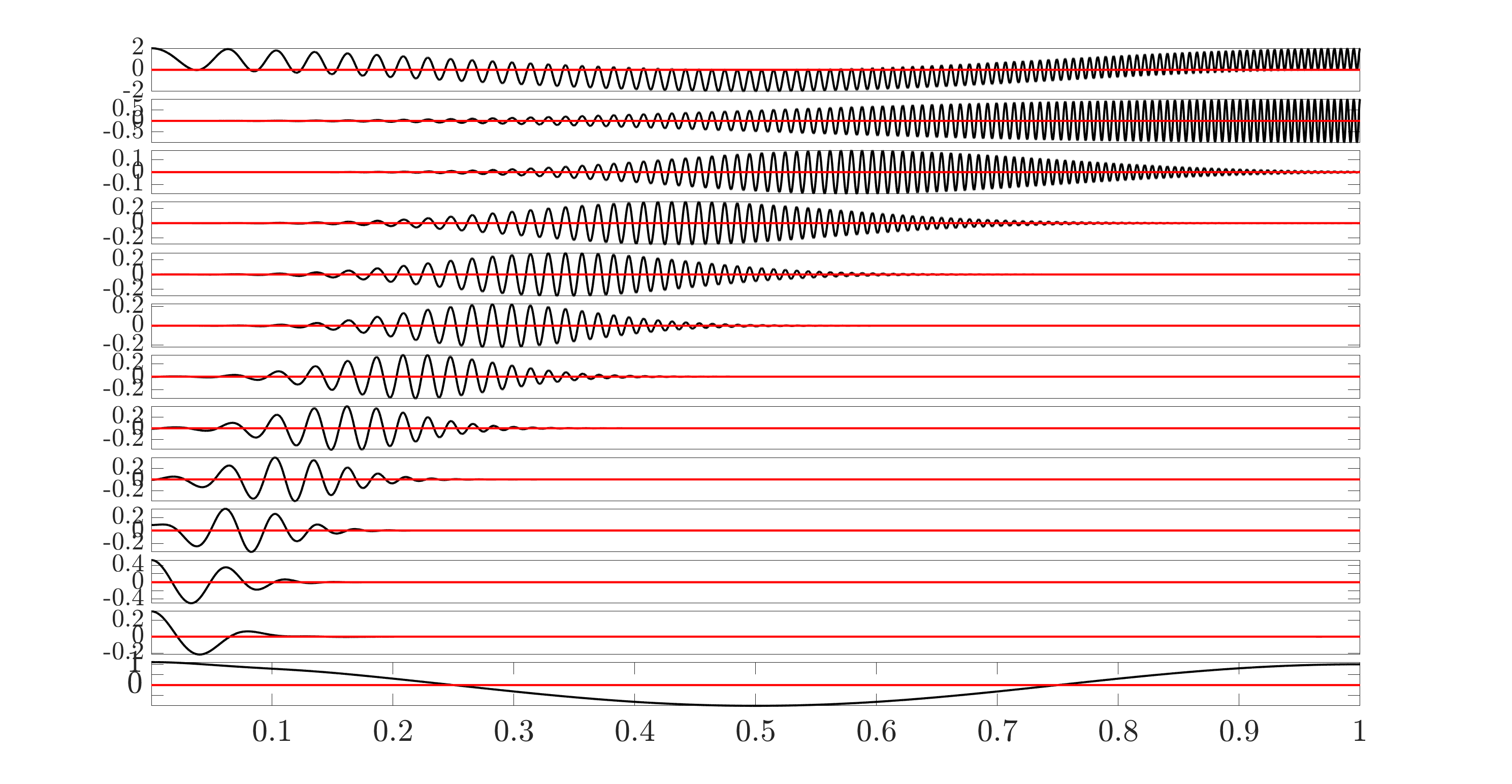}
    \includegraphics[width=0.3\textwidth]{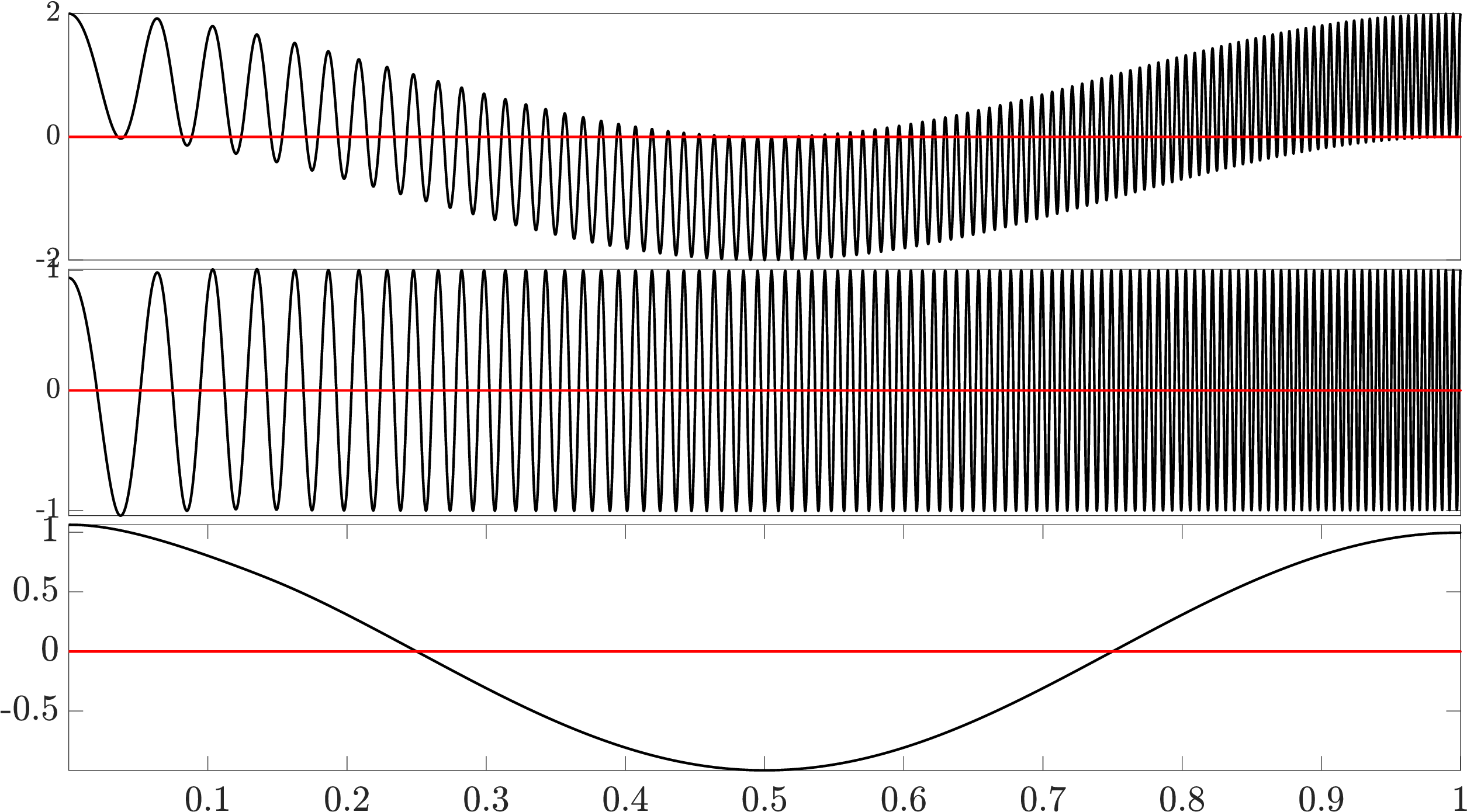}
	\caption{Left panel, the signal $s$ and its mono-frequency non-stationary components, first row and subsequent ones, respectively. Central panel, FIF decomposition using default parameter values ($\alpha = 30$, and $\Xi = 1.6$). Right panel, FIF decomposition with $\alpha = 100$, and $\Xi = 3$.}
	\label{fig:Ex_1_sig}
\end{figure}

We can observe from \Cref{fig:Ex_1_L1_energy_quasi_stat,fig:Ex_1_L1_energy_non_stat} that both decompositions conserve, up to machine precision, the $L^1$ Fourier Energy of the signal, see right panels in both figures, as expected from \Cref{thm:Energy_conservation}. This implies, by \Cref{thm:IMFs_orthogonality}, that the IMFs are also orthogonal to each other in the $L^1$ sense.

\begin{figure}[H]
	\includegraphics[width=0.32\textwidth]{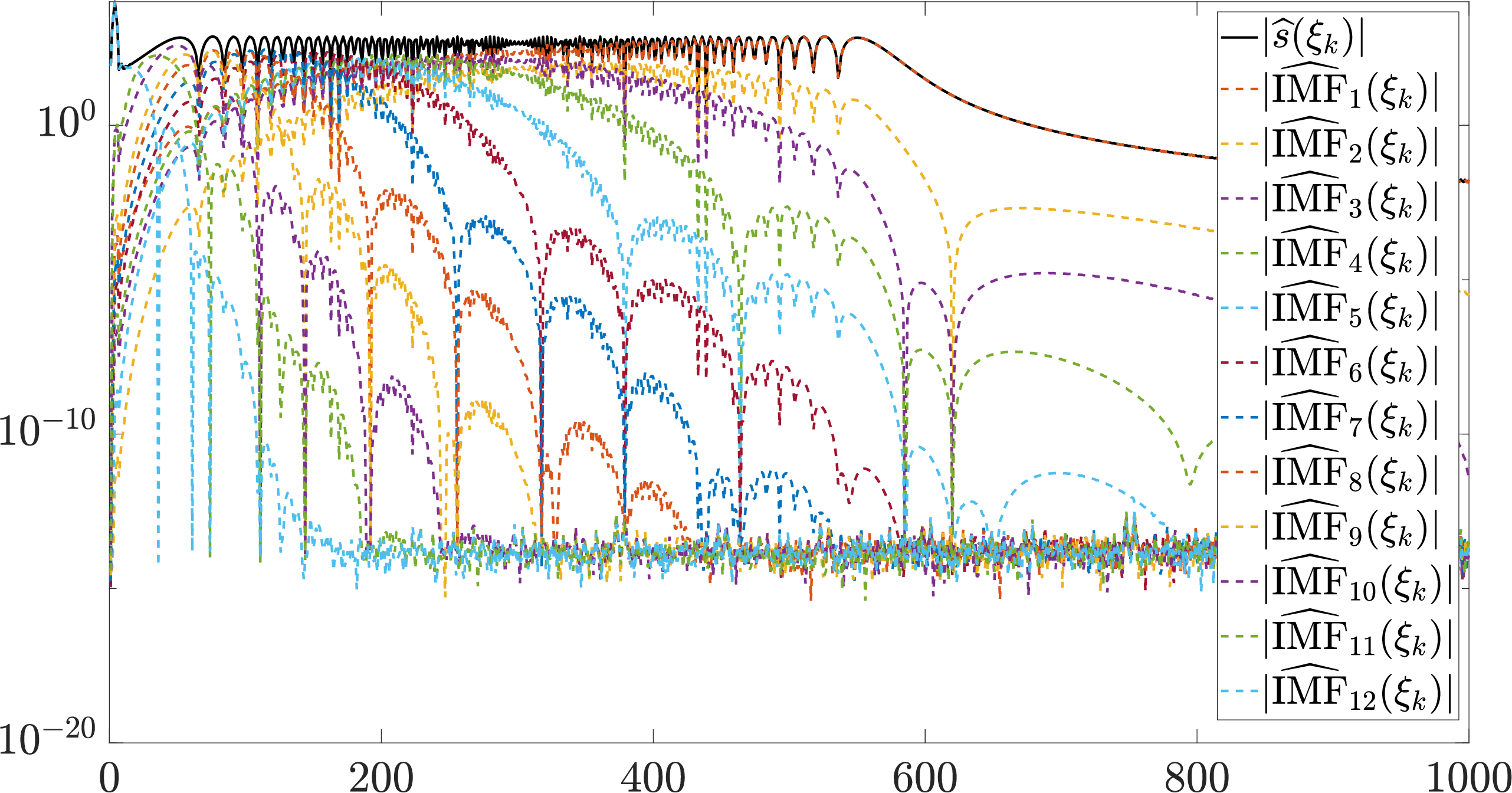}
    \includegraphics[width=0.32\textwidth]{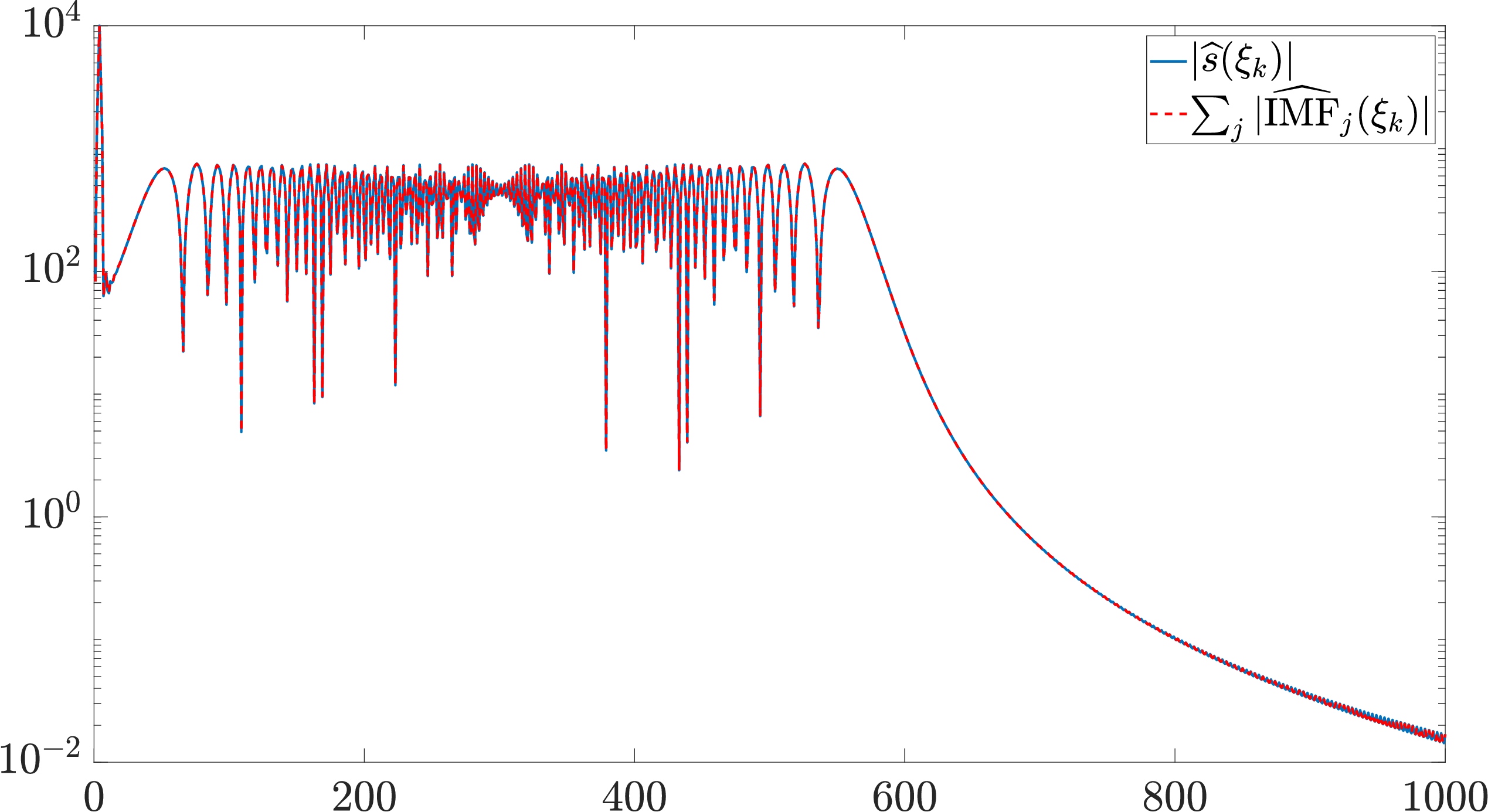}
    \includegraphics[width=0.32\textwidth]{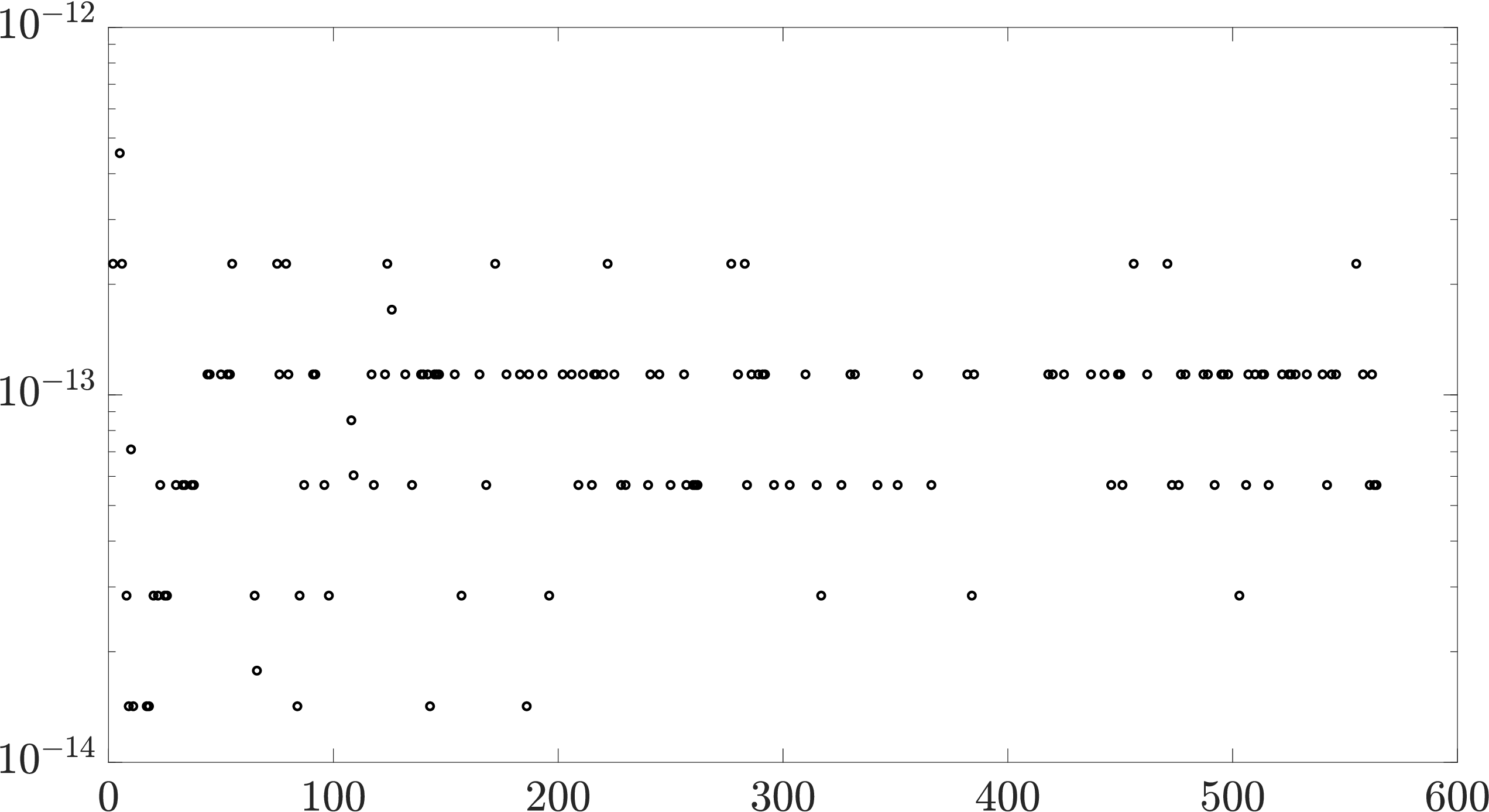}
	\caption{Left panel, the discrete Fourier transform absolute values of the signal $s$ and the IMFs produced by FIF with default parameter values $\alpha = 30$, and $\Xi = 1.6$, and displayed in the central panel of \Cref{fig:Ex_1_sig}. Central panel, the discrete Fourier transform absolute values of the signal $s$ and the summation of the discrete Fourier transform absolute values of the IMFs. Right panel, the difference between the discrete Fourier transform absolute values of the signal $s$ and the summation of the discrete Fourier transform absolute values of the IMFs. \zhou{Do the pictures in this figure mis-matched with the caption.}}
	\label{fig:Ex_1_L1_energy_quasi_stat}
\end{figure}

\begin{figure}[H]
	\includegraphics[width=0.32\textwidth]{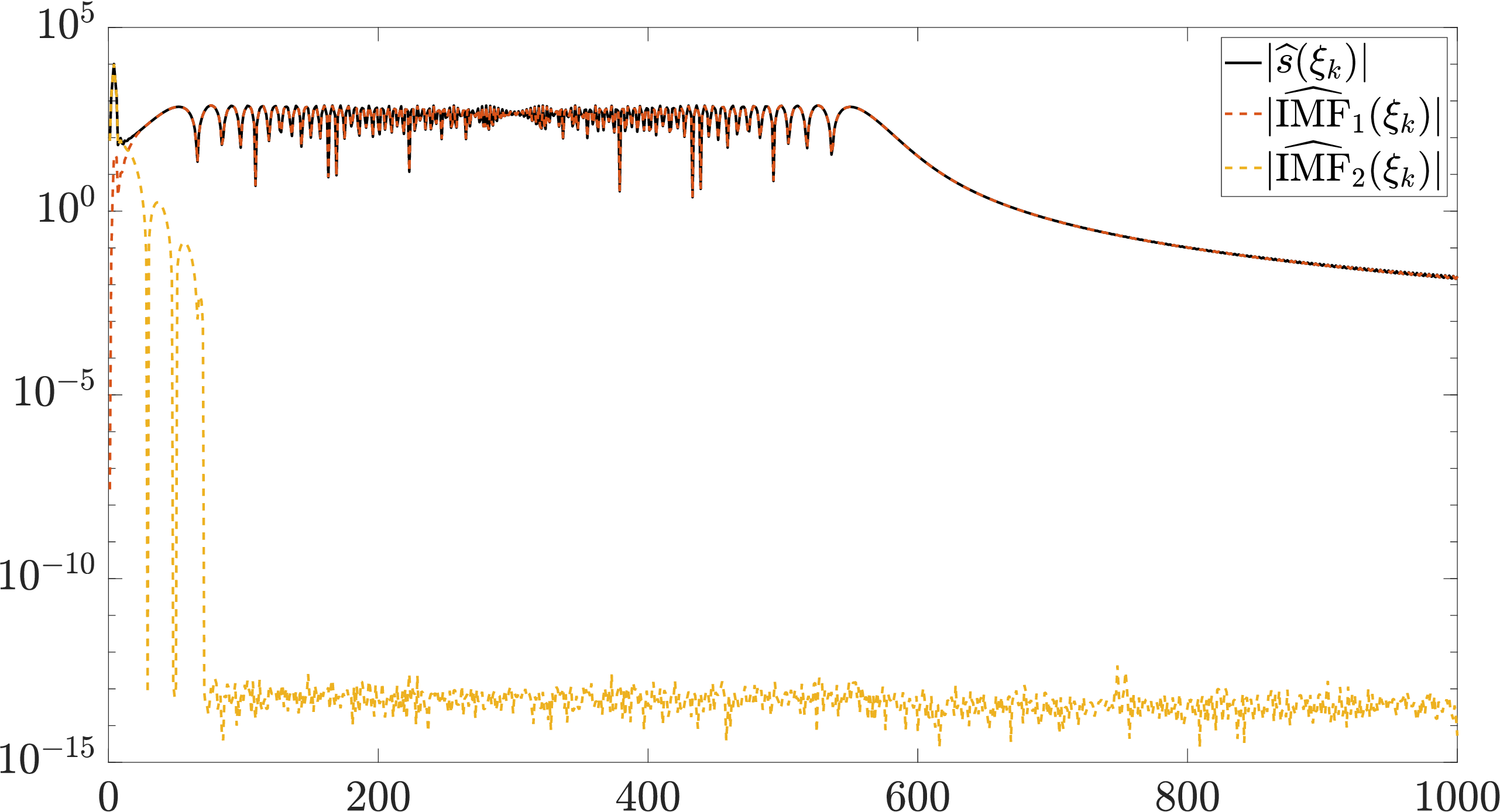}
    \includegraphics[width=0.32\textwidth]{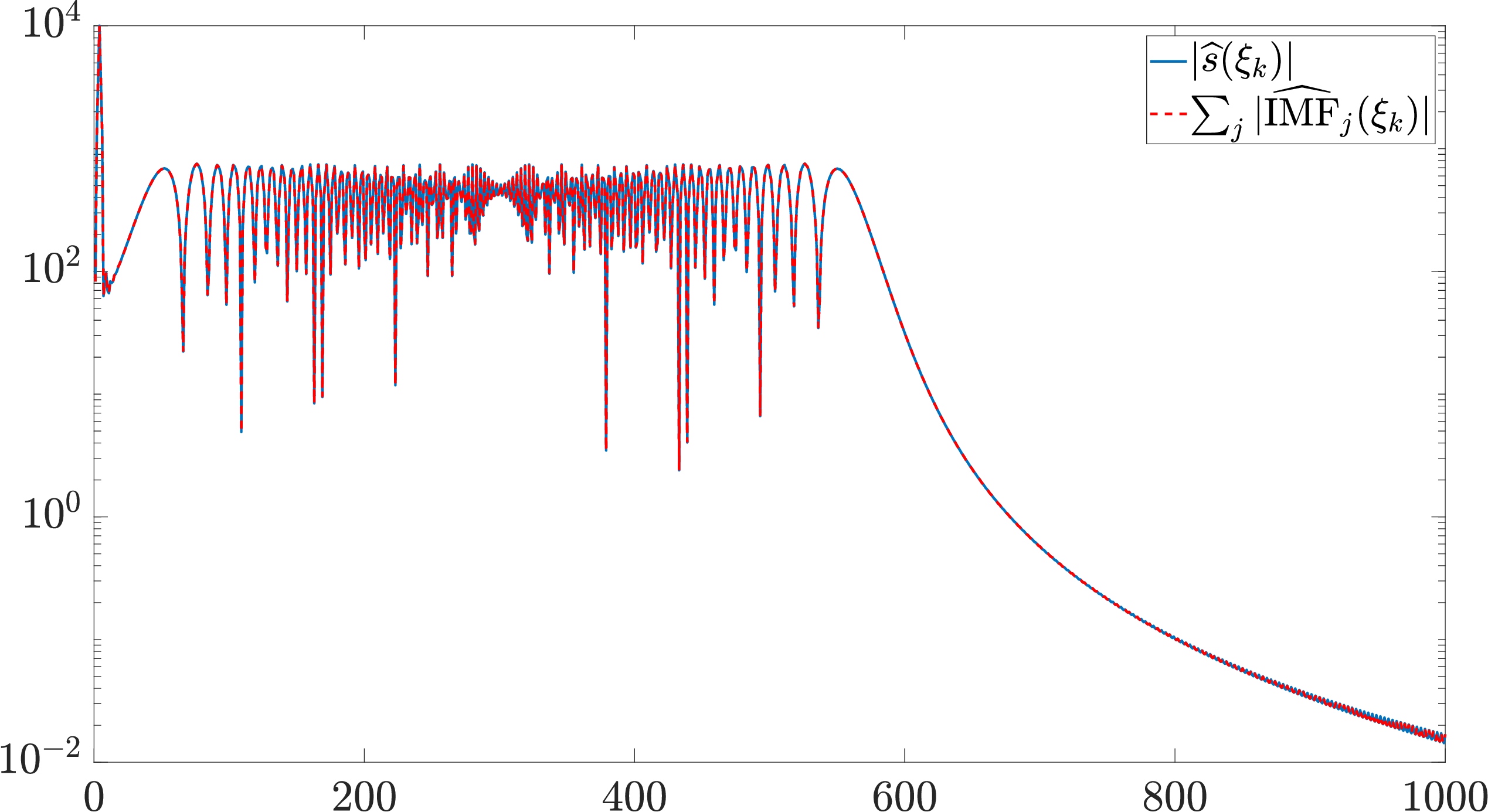}
    \includegraphics[width=0.32\textwidth]{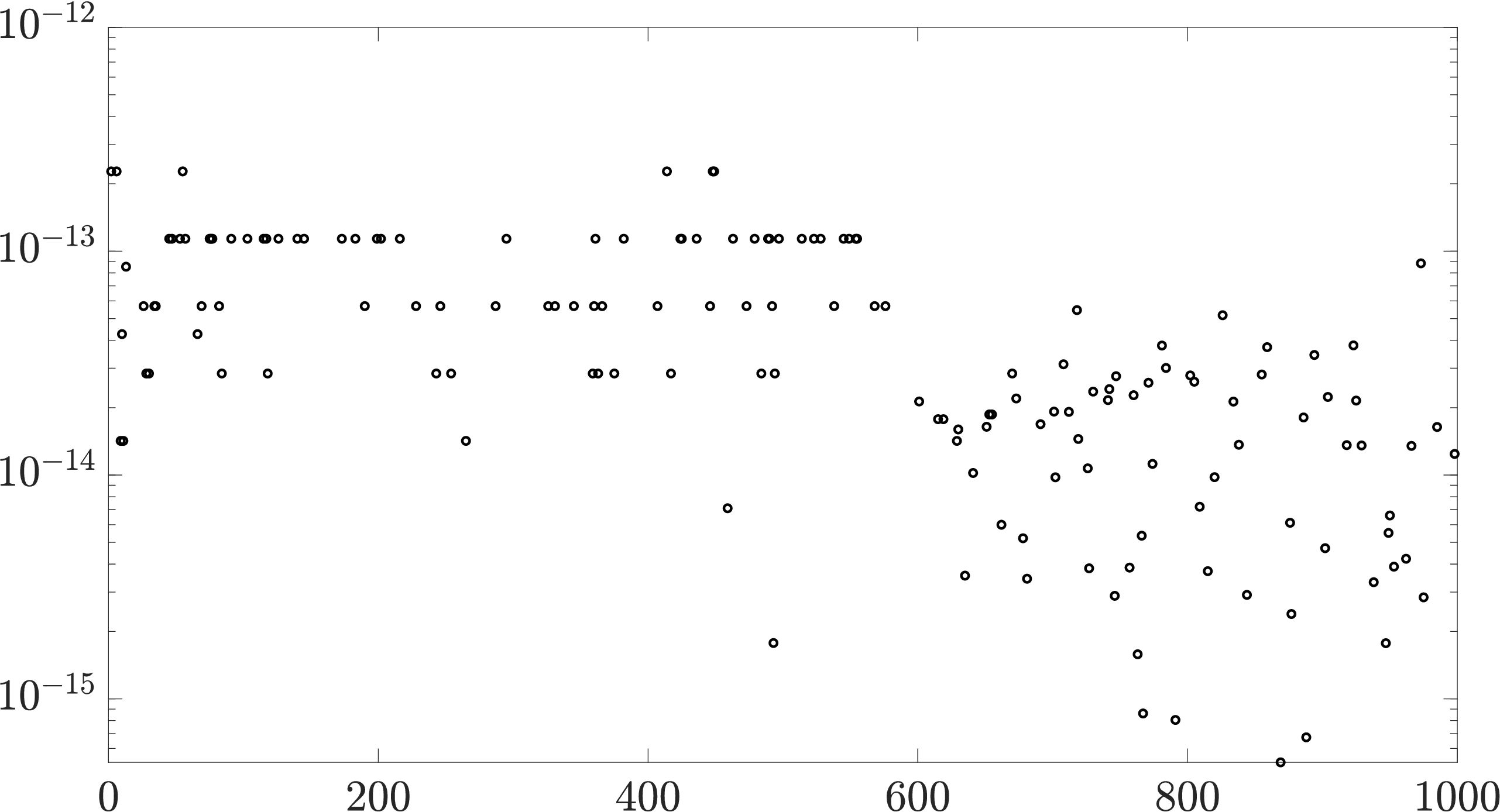}
	\caption{Left panel, $|\widehat s(\xi_k)|$ and $\left\{|\widehat{\textrm{IMF}}_j(\xi_k)|\right\}$ of the IMFs produced by FIF with parameter values $\alpha = 100$, and $\Xi = 3$, and displayed in the central panel of \Cref{fig:Ex_1_sig}. Central panel, $|\widehat s(\xi_k)|$ compared with $\sum_j |\widehat{\textrm{IMF}}_j(\xi_k)|$. Right panel, the difference between $|\widehat s(\xi_k)|$ and $\sum_j |\widehat{\textrm{IMF}}_j(\xi_k)|$.}
	\label{fig:Ex_1_L1_energy_non_stat}
\end{figure}

The importance of the fact that IF methods can produce several decompositions of the same signal and that they are all orthogonal in the $L^1$ sense is evident if we consider the well-known trigonometric identities product-to-sum and sum-to-product, also known as prosthaphaeresis formulas. From them, it is obvious that signals may be interpreted both as a single frequency with modulated amplitude or as a summation of two stationary frequencies, both with constant amplitude.

\subsection{Numerical Linear Algebra}

When considering the problem of solving large, structured linear systems in a fast way, one popular technique is the Frobenius optimal approximation, which leads to the design of preconditioners when the preconditioning matrix is constrained in some special linear space of matrices.

Among the most popular proposals, we can mention:
\begin{itemize}
\item The approximate sparse inverse preconditioning, where the linear space is given by a convenient sparsity pattern.
\item The Frobenius optimal preconditioning in fast matrix algebras, such as circulants, $\omega$-circulant, and trigonometric matrix algebras.
\end{itemize}
While the first is often used in the context of (multilevel) band structures, often encountered in the context of approximation of partial differential equations or graph problems \cite{sparse_appr1,sparse_appr2}, the second represents a solid research line in the context of structured problems of (multilevel) Toeplitz type and of their varying counterparts; see \cite{GLT-bookI,Ng_book} and references therein.

The current section is divided into two parts. In the first, we treat the minimization process, and in the second, we consider its practical use in terms of new preconditioning proposals, with a plan for future related investigations.

\subsubsection{$ l^{ p } $ circulant minimizers for Toeplitz matrices}

{Given ${n} \in \mathbb{N}$, a matrix of the form
\[
[A_{i,j}=A_{{i}-{j}}]_{{i},{j}=1}^{{n}} \in \mathbb{C}^{{n} \times {n}},
\]
with entries $A_{{k}} \in \mathbb{C}$, ${k} \in [-({n-1}), \dots, {n-1}]$, is called Toeplitz matrix or shift-invariant matrix.
A specific subclass of the Toeplitz matrices is that of circulant matrices which are characterized by a periodic shift-invariance, that is, $A$ is a $n\times n$ circulant if and only if $[A_{i,j}=A_{({i}-{j}){\rm mod}\, n}]_{{i},{j}=1}^{{n}} \in \mathbb{C}^{{n} \times {n}}$ with $A_{{k}} \in \mathbb{C}$, ${k} \in [0, \dots, {n-1}]$. Of course, a circulant matrix is also Toeplitz, while the converse is not true.

Given a complex-valued function $f : [-\pi, \pi] \to \mathbb{C}$ belonging to $L^1([-\pi, \pi])$, the ${n}$-th Toeplitz matrix associated with $f$ is defined as
\[
T_{{n}}(f) := [\hat{f}_{{i-j}}]_{{i,j=1}}^{{n}} \in \mathbb{C}^{{n} \times {n}},
\]
where
\[
\hat{f}_{{k}} = \frac{1}{2\pi} \int_{[-\pi,\pi]} f({\theta}) e^{-\hat i{k}\theta} d {\theta} \in \mathbb{C}, \quad {k} \in \mathbb{Z},
\]
are the Fourier coefficients of $f$, in which $\hat i$ denotes the imaginary unit.
$\{T_{{n}}(f)\}_{{n} \in \mathbb{N}}$ is the family of Toeplitz matrices associated with $f$, which is called the \textit{generating function}; see \cite[Chapter 6]{GLT-bookI} and references there reported.}

Circulant matrices represent an attractive way for approximating Toeplitz matrices, especially when they are extracted from a matrix sequence $\{T_{{n}}(f)\}_{{n} \in \mathbb{N}}$ associated with a generating function. In fact, circulant matrices of size $n$ form a $*$-algebra of matrices simultaneously diagonalized by the discrete Fourier transform of dimension $n$ and therefore any matrix operation has $O(n\log n)$ arithmetic cost, including eigenvalue computation, solution of linear systems etc \cite{Ng_book}, thanks to the fast Fourier transform algorithm \cite{Van_Loan}. For this reason and because of well-established spectral approximation results \cite{GLT-bookI,Ng_book}, in the last forty years, there has been a huge amount of work on preconditioning and multigrid using circulants or other matrix $*$-algebras associated to fast transforms.

In the following, we show that procedures inspired by nonstandard Pythagorean idea induces new lines of research, with practical and promising implications.

 For any square $n \times n$ matrix $ X = \left[ x_{ j, k } \right] $ and $ p \geq 1 $, consider the matrix norm defined as
\begin{equation}\label{lp-norm}
	\| X \|_{ l^{ p } } = \left( \sum_{ j, k = 1 }^{ n } \abs{ x_{ j, k } }^{ p } \right)^{ 1 / p }.
\end{equation}
When $p=2$, this $l^p$ norm coincides with the Schatten $2$-norm \cite{Bhatia}, or Frobenius norm, and it is induced by the positive Frobenius scalar product so that the space of $n\times n$ matrices with this scalar product is a Hilbert space. 
The Frobenius norm is very popular in numerical analysis for its favourable computational features. Here we consider the $l^p$ versions of the Frobenius norm, looking at its algebraic expression in the case of structures of Toeplitz type. For the sake of simplicity, we focus our attention on the one-level setting, even if the very same approaches can be easily generalized to multilevel block structures \cite{GLT-bookIV}. We notice that true $L^p$ version is that induced by the Schatten $p$-norms \cite{Bhatia}: however, it is computationally expensive to deal with them, and hence they are not treated here.

Given any complex matrix $X$ and by considering the Frobenius norm, we define
\[
C_{ n }^{ \left( 2 \right) }(X) = \arg \min_{ C_{ n } \text{ circulant} } \| X - C_{ n } \|_{ l^{2} }.
\]
The minimizer $C_{ n }^{ \left( 2 \right) }(X)$ exists and it is unique because circulant matrices form a finite-dimensional space (and hence closed and convex) and because of the unique Hilbert projection. Furthermore, for any complex matrix $X$ of size $n$, the corresponding Pythagorean relation
$\|X\|_{ l^{2} }^2=\| X - C_{ n }^{ \left( 2 \right) }(X) \|_{ l^{2} }^2+\|C_{ n }^{ \left( 2 \right) }(X) \|_{ l^{2} }^2$ holds.

Inspired by the reasoning above we consider a general $T_n(f)$ with Lebesgue integrable generating function $f$ and its circulant optimal approximation in $l^p$ norm for any $p\in [1,\infty)$ i.e. we consider $ C_{ n }^{ \left( p \right) }(f)$ as the best circulant approximation of $ T_{ n } \left( f \right) $ in the $ l^{ p } $ norm defined above, i.e.
\begin{equation}\label{minimizer-toe}
	C_{ n }^{ \left( p \right) }(f) = \arg \min_{ C_{ n } \text{ circulant} } \| T_{ n } \left( f \right) - C_{ n } \|_{ l^{ p } }.
\end{equation}
Now the Pythagorean equality $\|X\|_{ l^{2} }^2=\| X - C_{ n }^{ \left( 2 \right) }(f) \|_{ l^{2} }^2+\|C_{ n }^{ \left( 2 \right) }(f) \|_{ l^{2} }^2$ stands if and only if  $C_{ n }^{ \left( 2 \right) }(f)$ is the unique minimizer in (\ref{minimizer-toe}) with $p=2$.
Is there a generalized Pythagorean theorem for $p\neq 2$? And more importantly, is this approximation easily computable and competitive with the standard Frobenius choice with $p=2$?

\subsubsection{$ C_{ n }^{ \left( p \right) } $ as a preconditioner for Toeplitz systems}

In this section we investigate the effectiveness of $ C_{ n }^{ \left( p \right) }(f) $ as a preconditioner in the conjugate gradient method for linear Toeplitz systems. We start with a specific class of problems, designed to encode Toeplitz matrices with three levels of conditioning that are $O(1), O(n^2), O(n^4)$, while maintaining, for simplicity, a banded nature.

More precisely, let
\begin{equation} \label{def_f_1}
	f_{ \alpha, \beta, \gamma } \left( \theta \right) = \alpha + \beta \left( 2 - 2 \cos \theta \right) + \gamma \left( 2 - 2 \cos \theta \right)^{ 2 },
\end{equation}
with $\alpha, \beta, \gamma \ge 0$, $\beta \gamma \neq 0$, and let $ T_{ n } \left( f_{ \alpha, \beta, \gamma } \right) $ be the associated Toeplitz matrix. We observe that the minimum of $f_{ \alpha, \beta, \gamma }$ is exactly $\alpha$ while the maximum is $\alpha + 4\beta + 16\gamma$, due to the condition $\alpha, \beta, \gamma \ge 0$, to the even nature of the functions $\left( 2 - 2 \cos \theta \right)^{k}$, $k=0,1,2$, to the monotonic behavior in the interval $[0,\pi]$. Furthermore,  $\alpha <\alpha + 4\beta + 16\gamma$ because of the additional condition $\beta \gamma \neq 0$.
Under the inequality stated above, we know that  $ T_{ n } \left( f_{ \alpha, \beta, \gamma } \right) $ is positive definite with all the eigenvalues in the open interval $(\alpha,\alpha + 4\beta + 16\gamma)$ with the minimal eigenvalue of $ T_{ n } \left( f_{ \alpha, \beta, \gamma } \right) $ forming a monotonic strictly decreasing sequence converging to $\alpha$ and with the maximal eigenvalue of $ T_{ n } \left( f_{ \alpha, \beta, \gamma } \right) $ forming a monotonic strictly increasing sequence converging to $\alpha+ 4\beta + 16\gamma$.
Therefore, for $\alpha>0$ the Euclidean condition number converges to $\frac{\alpha+ 4\beta + 16\gamma}{\alpha}$ while for $\alpha=0$ and $\beta>0$ the Euclidean condition number divided by $n^2$ converges to a positive constant, since $f_{ \alpha, \beta, \gamma }$ has a unique zero at zero of order two. Finally, when $\alpha=\beta=0$ and $\gamma>0$, the Euclidean condition number divided by $n^4$ converges to a positive constant, since $f_{ \alpha, \beta, \gamma }$ has a unique zero at zero of order four; see \cite{extreme1,extreme2,GLT-bookI} and references therein.

Let $ C_{ n }^{ \left( p \right) }(f) $ be the best circulant approximation of $ T_{ n } \left( f \right) $ in the $ l^{ p } $ norm defined above, i.e.
\begin{equation*}
	C_{ n }^{ \left( p \right) }(f) = \arg \min_{ C_{ n } \text{ circulant} } \| T_{ n } \left( f \right) - C_{ n } \|_{ l^{ p } },
\end{equation*}
with $f=f_{ \alpha, \beta, \gamma }$.
Clearly, the latter minimization problem is equivalent to
\begin{equation} \label{minim_1}
	C_{ n }^{ \left( p \right) }(f) = \arg \min_{ C_{ n } \text{ circulant} } \| T_{ n } \left( f \right) - C_{ n } \|_{ l^{ p } }^{ p }.
\end{equation}
For simplicity, put $ \phi = \alpha + 2 \beta + 6 \gamma $ and $ \psi = - \beta - 4 \gamma $, so that $ f_{ \alpha, \beta, \gamma } \left( \theta \right) = f \left( \theta \right) = \phi + \psi \left( e^{ \hat i \theta } + e^{ - \hat i \theta } \right) + \gamma \left( e^{ 2 \hat i \theta } + e^{ - 2 \hat i \theta } \right) $, and denote as $ c = \left[ c_{ j }^{ \left( p \right) } \right]_{ j = 0 }^{ n - 1 } $ the first column of $ C_{ n }^{ \left( p \right) } $. \\
Some computations lead to an explicit formulation of the minimization problem \eqref{minim_1} as
\begin{equation*}
	\begin{cases}
		c_{ 0 }^{ \left( p \right) } = \phi \\
		c_{ 1 }^{ \left( p \right) } = \arg \min_{ c \in \R } \left( n - 1 \right) \abs{ \psi - c }^{ p } + \abs{ c }^{ p } \\
		c_{ 2 }^{ \left( p \right) } = \arg \min_{ c \in \R } \left( n - 2 \right) \abs{ \gamma - c }^{ p } + 2 \abs{ c }^{ p } \\
		c_{ n - 2 }^{ \left( p \right) } = c_{ 2 }^{ \left( p \right) } \\
		c_{ n - 1 }^{ \left( p \right) } = c_{ 1 }^{ \left( p \right) } \\
		c_{ j }^{ \left( p \right) } = 0 \text{ for } j = 3, \dots n - 3
	\end{cases},
\end{equation*}
with explicit solution
\begin{equation*}
	\begin{cases}
		c_{ 0 }^{ \left( p \right) } = \phi \\
		c_{ 1 }^{ \left( p \right) } = c_{ n - 1 }^{ \left( p \right) } =
		\begin{cases}
			\frac{ \psi \left( n - 1 \right)^{ 1 / \left( p - 1 \right) } }{ 1 + \left( n - 1 \right)^{ 1 / \left( p - 1 \right) } } \text{ for } p > 1 \\
			\psi \text{ for } p = 1
		\end{cases} \\
		c_{ 2 }^{ \left( p \right) } = c_{ n - 2 }^{ \left( p \right) } =
		\begin{cases}
			\frac{ \gamma }{ 1 + \left( \frac{ 2 }{ n - 2 } \right)^{ 1 / \left( p - 1 \right) } } \text{ for } p > 1 \\
			\gamma \text{ for } p = 1
		\end{cases} \\
		c_{ j }^{ \left( p \right) } = 0 \text{ for } j = 3, \dots n - 3
	\end{cases}.
\end{equation*}
Note that $ C_{ n }^{ \left( p \right) } $ is then symmetric.

Now we investigate the effectivness of $ C_{ n }^{ \left( p \right) }(f_{ \alpha, \beta, \gamma }) $ as a preconditioner in the conjugate gradient menthod for Toeplitz linear systems, whose coefficient matrices are $T_n(f)$ with $ f = f_{ \alpha, \beta, \gamma } $ as in \eqref{def_f_1}, for varying choices of the nonnegative parameters $ \alpha $, $ \beta $ and $ \gamma $. \\
First of all we notice that an explicit formula for the spectrum of $ C_{ n }^{ \left( p \right) } $ is available as
\begin{equation*}
	\Lambda \left( C_{ n }^{ \left( p \right) } \right) = \sqrt{ n } F_{ n }^{ * } c,
\end{equation*}
where $ F_{ n } = n^{ -1 } \left[ \exp \left( - \i \frac{ 2 \pi j k }{ n } \right) \right]_{ j, k = 0 }^{ n - 1 } $ is the normalized Fourier matrix and $ c $, as before, the first column of $ C_{ n }^{ \left( p \right) } $; hence, for $ j = 1, \dots n $,
\begin{eqnarray*}
	\lambda_{ j } \left( C_{ n }^{ \left( p \right) } \right) & = & \sum_{ k = 0 }^{ n - 1 } \exp \left( - \i \frac{ 2 \pi \left( j - 1 \right) k }{ n } \right) c_{ j } = \sum_{ k \in \left\lbrace 0, 1, 2, n - 2, n - 1 \right\rbrace } \exp \left( - \i \frac{ 2 \pi \left( j - 1 \right) k }{ n } \right) c_{ j } \\
																														& = & c_{ 0 } + 2 c_{ 1 } \cos \frac{ 2 \pi \left( j - 1 \right) }{ n } + 2 c_{ 2 } \cos \frac{ 4 \pi (j-1) }{ n },
\end{eqnarray*}
i.e.
\begin{equation}\label{spett_C}
\lambda_{ j } \left( C_{ n }^{ \left( p \right) }(f) \right) = f_{ \alpha, \beta, \gamma }(\theta_j^{ \left( n \right) })+
2\left((c_{ 1 }^{ \left( p \right) }-\psi)\cos\left(\theta_j^{ \left( n \right) }\right)\right) +
2\left((c_{ 2 }^{ \left( p \right) }-\gamma)\cos\left(2\theta_j^{ \left( n \right) }\right)\right).
\end{equation}
Since $c_{ 1 }^{ \left( p \right) }$ converges to $\psi$ as $n$ tends to infinity and $c_{ 2 }^{ \left( p \right) }$ converges to $\gamma$ as $n$ tends to infinity, the number of potential negative eigenvalues of $C_{ n }^{ \left( p \right) }(f)$ are $O(1)$ and they have infinitesimal modulus, as $n$ tends to infinity: in that case a $O(1)$ rank correction of $C_{ n }^{ \left( p \right) }(f)$ leads to a positive definite preconditioner using a Strang-type correction; see \cite{CN-Sirev} and references therein.

Since $ T_{ n } \left( f \right) $ is real symmetric positive definite and of finite bandwidth (or equivalently generated by a nonnegative even trigonometric polynomial), it may be regarded as a small rank plus small rank correction of a circulant matrix $ C_{ n }$, whose first column
${\bf c}_1$  is given by $ {\bf c}_1 = \left[ \phi, \psi, \gamma, 0 \dots, 0, \gamma, \psi \right]^{ T } $. \\
If we assume $ C_{ n }^{ \left( p \right) }(f) $ to be positive definite as well, that is imposing that the quantities in (\ref{spett_C}) are positive, then, by classical results on preconditioning \cite{CN-Sirev,Koro1} and up to a negligible amount of eigenvalues, it holds that $C_{ n }^{ \left( p \right), -1 } T_{ n }$ can be written as the identity plus a low norm correction term plus a small rank correction term. Hence, with $ C_{ n }^{ \left( p \right), -1 } $ denoting the inverse of $ C_{ n }^{ \left( p \right) } $, by a standard use of interlacing theorems as in
\cite{Bhatia,GLT-bookI}, we know that the sets
\begin{equation*}
	\left\{\lambda_{ j } \left( C_{ n }^{ \left( p \right), -1 }(f) T_{ n }(f) \right) \right\}
\end{equation*}
cluster to $1$, as $n$ tends to infinity, so ensuring fast convergence. However the dependency on $p$ can not be easily studied. In this respect, the numerical results give a clear indication that in some cases the lower is $p$ the better the clustering and the convergence speed;
see Table  \ref{tab:tab_02} and Table \ref{tab:tab_03} and the wide experimentation in the numerical section of \cite{beyond}.

\begin{table}[h]
	\caption{$ \left( \alpha, \beta, \gamma \right) = \left( 1, 2, 3 \right) $}
	\begin{tabular}{ll|llllllll}
		& $ p $ & $ 1 $ & $ 1.4 $ & $ 1.6 $ & $ 1.8 $ & $ 3 $  & $ 5 $  & $ 10 $ & n. p. \\
		$ n $    &       &       &         &         &         &        &        &        &                    \\
		$ 100 $  &       & $ 3 $ & $ 4 $   & $ 5 $   & $ 6 $   & $ 17 $ & $ 28 $ & $ 36 $ & $ 50 $             \\
		\hline
		$ 400 $  &       & $ 3 $ & $ 4 $   & $ 4 $   & $ 5 $   & $ 13 $ & $ 23 $ & $ 32 $ & $ 74 $             \\
		\hline
		$ 700 $  &       & $ 3 $ & $ 4 $   & $ 4 $   & $ 5 $   & $ 11 $ & $ 22 $ & $ 31 $ & $ 73 $             \\
		\hline
		\hline
		$ 1000 $ &       & $ 3 $ & $ 4 $   & $ 4 $   & $ 5 $   & $ 10 $ & $ 21 $ & $ 31 $ & $ 73 $
	\end{tabular}
	\caption*{Iteration results for the well-conditioned problem $ \left( \alpha, \beta, \gamma \right) = \left( 1, 2, 3 \right) $; here n. p. stands for non preconditioned.}
	\label{tab:tab_02}
\end{table}
\begin{table}[H]
	\caption{$ \left( \alpha, \beta, \gamma \right) = \left( 0, 2, 8 \right) $}
	\begin{tabular}{ll|llllllll}
		& $ p $ & $ 1 $ & $ 1.4 $ & $ 1.6 $ & $ 1.8 $ & $ 3 $  & $ 5 $  & $ 10 $ & n. p. \\
		$ n $    &       &       &         &         &         &        &        &        &                    \\
		$ 100 $  &       & $ \# $ & $ \# $   & $ 11 $   & $ 17 $   & $ 38 $ & $ 48 $ & $ 53 $ & $ 100 $             \\
		\hline
		$ 400 $  &       & $ \# $ & $ \# $   & $ 13 $   & $ 24 $   & $ 107 $ & $ 195 $ & $ 272 $ & $ \# $             \\
		\hline
		$ 700 $  &       & $ \# $ & $ \# $   & $ 13 $   & $ 29 $   & $ 168 $ & $ 344 $ & $ 498 $ & $ \# $             \\
		\hline
		$ 1000 $ &       & $ \# $ & $ \# $   & $ 15 $   & $ 32 $   & $ 223 $ & $ 485 $ & $ 729 $ & $ \# $
	\end{tabular}
	\caption*{Iteration results for the ill-conditioned problem $ \left( \alpha, \beta, \gamma \right) = \left( 0, 2, 8 \right) $; here n. p. stands for non-preconditioned and $ \# $ indicates the method failing due to matrix singular to machine precision.}
	\label{tab:tab_03}
\end{table}

\begin{remark}
	Heuristic observations lead to the following intuition: the most effective $ p $-norm circulant preconditioner (\ref{minim_1}) for
	$T_n= T_{ n } \left( f_{ \alpha, \beta, \gamma } \right)  $ coincides with $ C_{ n }^{ \left( \widetilde{ p } \right) }=C_{ n }^{ \left( \widetilde{ p } \right) }  \left( f_{ \alpha, \beta, \gamma } \right)$, with
	\begin{equation*}
		\widetilde{ p } = \min \left\lbrace p \geq 1 \text{ such that } \forall j = 1, \dots, n,\  \>\> \abs{ \Im \left( \lambda_{ j } \left( C_{ n }^{ \left( p \right), -1 } T_{ n } \right) \right) } < \epsilon \right\rbrace,
	\end{equation*}
	where $ \epsilon > 0 $ is chosen small. \\
	For instance, in the case summarized in  Table \ref{tab:tab_03}, we observe that PCG fails when preconditioned with $ C_{ n }^{ \left( p \right) } $, $ p = 1, 1.4 $ and achieves good performance with $ C_{ n }^{ \left( p \right) } $, $ p = 1.6 $. Indeed, as reported in \cite{beyond}, the imaginary parts spoil the PCG convergence for $ p = 1, 1.4 $ while the convergence is very good for $p=1.6$, with imaginary parts of the order of $ \sim 10^{ - 4 } $. \\
	The condition of a purely real positive spectrum of $ C_{ n }^{ \left( p \right), -1 } T_{ n } $ is implied by the positive definiteness of $ C_{ n }^{ \left( p \right) } $, which can be checked via the quantities in (\ref{spett_C}).
	
Furthermore, as reported in  Table \ref{tab:tab_02}, the fact that the case $p=1$ is optimal for some values of the parameters opens the door to consider a functional in (\ref{lp-norm}) with $p\in (0,1)$. Even if the considered functional is not a norm (the set of the matrices $X$ such that $\|X\|_{ l^{ p } }\le 1$ is not convex), the minimization process can be considered and it is not excluded that it can lead to computationally effective and better results, mimicking what it is known in imaging \cite{imag1,imag2}.
\end{remark}


\section{Conclusion}\label{sec:Conclusion}

In the present work, we have indicated possible generalization of the notions of angles and orthogonality in generic Banach spaces, e.g. $L^p$, $p\in [1,\infty)$. When considering the intrinsic mode functions decomposition of non-stationary signals done in signal processing, we find a clear example of conservation of energy, which can be read as the Pythagorean theorem in $L^1$. We have provided an initial theoretical analysis and, inspired by these new concepts, we have proposed new preconditioning proposals in the context of numerical methods for the fast solution of structured large linear systems. A few numerical examples have been presented and critically discussed, together with various remarks on possible further use of the given framework. We mention that the generalization of the notions of angles and orthogonality presented in this paper can be further extended to the case of normed linear spaces. We plan to do so in future work.

\section*{Acknowledgment}
AC, SSC, and GT are members of the Gruppo Nazionale Calcolo Scientifico-Istituto Nazionale di Alta Matematica (GNCS-INdAM).

The research of AC was partially supported through the GNCS-INdAM Project, CUP E53C23001670001. AC was supported by the Italian Ministry of the University and Research and the European Union through the ``Next Generation EU'', Mission 4, Component 1, under the PRIN PNRR 2022 grant number CUP E53D23018040001 ERC field PE1 project P2022XME5P titled ``Circular Economy from the Mathematics for Signal Processing prospective''.

Furthermore, SSC has been supported by the Theory, Economics and Systems Laboratory (TESLAB) of the Department of Computer Science of the Athens University of Economics and Business and by the PRIN-PNRR project MATHPROCULT (``MATHematical tools for predictive maintenance and PROtection of CULTural heritage'', Code P20228HZWR, CUP J53D23003780006). 

The research of HZ has been partially supported by NSF DMS-2307465 and DMS-2510829, and the Elaine M. Hubbard fellowship.

\bibliographystyle{plain}
\bibliography{Pythagoras}

@article{cicone2024new,
  title={{New theoretical insights in the decomposition and time-frequency representation of nonstationary signals: the IMFogram algorithm}},
  author={Cicone, Antonio and Li, Wing Suet and Zhou, Haomin},
  journal={Applied and Computational Harmonic Analysis},
  volume={71},
  pages={101634},
  year={2024},
  publisher={Elsevier}
}

@article{lumer1961semi,
  title={{Semi-inner-product spaces}},
  author={Lumer, G{\"u}nter},
  journal={Transactions of the American Mathematical Society},
  volume={100},
  number={1},
  pages={29--43},
  year={1961},
  publisher={JSTOR}
}

@article{Giles1967,
  author    = {Giles, J. R.},
  title     = {Classes of semi-inner-product spaces},
  journal   = {Transactions of the American Mathematical Society},
  year      = {1967},
  volume    = {129},
  number    = {3},
  pages     = {436--446},
  doi       = {10.1090/S0002-9947-1967-0217574-1}
}

@book{Dragomir2004,
  author    = {Dragomir, Sever Silvestru},
  title     = {Semi-Inner Products and Applications},
  publisher = {Nova Science Publishers, Inc.},
  address   = {Hauppauge, NY},
  year      = {2004},
  isbn      = {1-59033-947-9}
}

@article{cicone2022multivariate,
  title={{Multivariate fast iterative filtering for the decomposition of nonstationary signals}},
  author={Cicone, Antonio and Pellegrino, Enza},
  journal={IEEE Transactions on Signal Processing},
  volume={70},
  pages={1521--1531},
  year={2022},
  publisher={IEEE}
}

@article{gustafson1968angle,
  title={{The angle of an operator and positive operator products}},
  author={Gustafson, Karl},
  journal={Bulletin of the American Mathematical Society},
  volume={74},
  issue={3},
  pages={488-492},
  year={1968}
}

@book{axler1997numerical,
  title={Numerical range: The field of values of linear operators and matrices},
  author={Gustafson,  Karl and Rao, Duggirala K.M. },
  year={1997},
  publisher={Springer New York}
}

@article{lin2009iterative,
  title={{Iterative filtering as an alternative algorithm for empirical mode decomposition}},
  author={Lin, Luan and Wang, Yang and Zhou, Haomin},
  journal={Advances in Adaptive Data Analysis},
  volume={1},
  number={04},
  pages={543--560},
  year={2009},
  publisher={World Scientific}
}

@article{huang1998empirical,
  title={{The empirical mode decomposition and the Hilbert spectrum for nonlinear and non-stationary time series analysis}},
  author={Huang, Norden E and Shen, Zheng and Long, Steven R and Wu, Manli C and Shih, Hsing H and Zheng, Quanan and Yen, Nai-Chyuan and Tung, Chi Chao and Liu, Henry H},
  journal={Proceedings of the Royal Society of London. Series A: mathematical, physical and engineering sciences},
  volume={454},
  number={1971},
  pages={903--995},
  year={1998},
  publisher={The Royal Society}
}

@article{cicone2021numerical,
  title={{Numerical analysis for iterative filtering with new efficient implementations based on FFT}},
  author={Cicone, Antonio and Zhou, Haomin},
  journal={Numerische Mathematik},
  volume={147},
  number={1},
  pages={1--28},
  year={2021},
  publisher={Springer}
}

@article{GLT-bookIV,
  title={{Block generalized locally Toeplitz sequences: theory and applications in the multidimensional case}},
  author={Barbarino, Giovanni and Garoni, Carlo and Stefano Serra-Capizzano},
  journal={Electronic Transactions on Numerical Analysis},
  volume={53},
  number={1},
  pages={113--216},
  year={2020},
  publisher={Kent State University, Johann Radon Institute}
}

@book{Bhatia,
  title={Matrix analysis},
  author={Bhatia, Rajendra},
  year={1997},
  publisher={Springer}
}

@book{GLT-bookI,
  title={Generalized locally Toeplitz sequences: theory and applications. Vol. I.},
  author={Garoni, Carlo and Stefano Serra-Capizzano},
  year={2017},
  publisher={Springer}
}

@article{sparse_appr1,
  title={{Approximate sparsity patterns for the inverse of a matrix and preconditioning}},
  author={Huckle, Thomas},
  journal={Applied Numerical Mathematics},
  volume={30},
  number={2},
  pages={291-303},
  year={1999},
  publisher={Elsevier}
}

@article{sparse_appr2,
  title={{Nesting approximate inverses for improved preconditioning and algebraic multigrid smoothing}},
  author={Janna, Carlo and Franceschini, Andrea},
  journal={SIAM Journal on Matrix Analysis and Applications},
  volume={46},
  number={1},
  pages={393-415},
  year={2025},
  publisher={Elsevier}
}

@book{Van_Loan,
  title={Computational Frameworks for the Fast Fourier Transform},
  author={Van Loan, Charles},
  year={1992},
  publisher={Society for Industrial and Applied Mathematics}
}

@book{Ng_book,
  title={Iterative methods for Toeplitz systems},
  author={Ng, Michael K.},
  year={2004},
  publisher={Oxford University Press}
}

@article{extreme1,
  title={{On the extreme spectral properties of Toeplitz matrices generated by L1 functions with several minima/maxima}},
  author={Serra-Capizzano, Stefano},
  journal={BIT Numerical Mathematics},
  volume={36},
  number={1},
  pages={135-142},
  year={1996},
  publisher={Springer}
}

@article{extreme2,
  title={{On the condition numbers of large semi-definite Toeplitz matrices}},
  author={Böttcher, Albrecht and Grudsky, Sergei M.},
  journal={Linear Algebra and Its Applications},
  volume={279},
  number={1-3},
  pages={285-301},
  year={1998},
  publisher={Elsevier}
}

@article{CN-Sirev,
  title={{Conjugate gradient methods for Toeplitz systems}},
  author={Chan, Raymond H. and Ng, Michael K.},
  journal={SIAM Review},
  volume={38},
  number={3},
  pages={427-482},
  year={1996},
  publisher={Elsevier}
}

@article{Koro1,
  title={{A Korovkin-type theory for finite Toeplitz operators via matrix algebras}},
  author={Serra-Capizzano, Stefano},
  journal={Numerische Mathematik},
  volume={82},
  number={1},
  pages={117-142},
  year={1999},
  publisher={Springer}
}

@article{imag1,
  title={{A generalized Krylov subspace method for $\ell_p$-$\ell_q$ minimization}},
  author={Lanza, Alessandro and Morigi, Serena and Reichel, Lothar and Sgallari, Fiorella},
  journal={SIAM Journal on Scientific Computing},
  volume={37},
  number={5},
  pages={S30-S50},
  year={2015},
  publisher={Elsevier}
}

@article{imag2,
  title={{Modulus-based iterative methods for constrained $\ell_p$-$\ell_q$ minimization}},
  author={Buccini, Alessandro and Pasha, Mirjeta and Reichel, Lothar},
  journal={Inverse Problems},
  volume={36},
  number={8},
  pages={084001},
  year={2020},
  publisher={IOP Publishing}
}

@article{beyond,
  title={{Minimization processes, Pythagorean theorem, and Toeplitz preconditioning}},
  author={Cicone, Antonio and Serra-Capizzano, Stefano and Tento, Giacomo and Zhou Haomin},
  journal={Preprint},
  year={2026}
}

@article{Longo2025AdaptiveSL,
  title={Adaptive scattered light noise subtraction in GW detectors},
  author={Alessandro Longo and Gabriele Demasi and Francesco Di Renzo and Stefano Bianchi and Guillermo Valdes and Nicolas Arnaud and Gianluca Inguglia and Antonio Cicone and Massimo Lenti and Francesco Bucci and Giulio Settanta and Matteo Montani and Roberto Cavassi},
  journal={Classical and Quantum Gravity},
  year={2025},
  url={https://api.semanticscholar.org/CorpusID:281646617}
}

@article{spogli2025investigating,
  title={Investigating the drivers of long-term trends in the upper atmosphere over Rome across four decades},
  author={Spogli, Luca and Sabbagh, Dario and Perrone, Loredana and Scotto, Carlo and Cesaroni, Claudio},
  journal={Journal of Space Weather and Space Climate},
  volume={15},
  pages={8},
  year={2025},
  publisher={EDP Sciences}
}

\end{document}